\def\cal{\mathcal}
\newcommand{\EE}{{\mathbb E}}
\newcommand{\NN}{{\mathbb N}}
\newcommand{\PP}{{\mathbb P}}
\newcommand{\vX}{\overleftarrow{X}}
\newcommand{\vtau}{\overleftarrow{\tau}}
\newcommand{\htau}{\widehat{\tau}}
\newcommand{\wtau}{\widetilde{\tau}}
\newcommand{\hX}{\widehat X}
\newcommand{\hP}{\widehat P}
\newcommand{\wP}{\widetilde P}
\newcommand{\wI}{\widetilde I}
\newcommand{\uP}{\underline P}
\newcommand{\wX}{\widetilde X}
\newcommand{\wx}{{\widetilde x}}
\newcommand{\wy}{{\widetilde y}}
\newcommand{\wmu}{{\widetilde \mu}}
\newcommand{\umu}{{\underline \mu}}
\newcommand{\uX}{{\underline X}}
\newcommand{\upartial}{{\underline{\partial}}}
\newcommand{\wpartial}{{\widetilde{\partial}}}
\newcommand{\bpi}{{\overline{\pi}}}
\begin{document}

\title[duality and intertwining]{Truncation in Duality and Intertwining Kernels}
\author{Thierry Huillet$^{1}$, Servet Martinez$^{2}$}
\address{$^{1}$Laboratoire de Physique Th\'{e}orique et Mod\'{e}lisation \\
CNRS-UMR 8089 et Universit\'{e} de Cergy-Pontoise, 2 Avenue Adolphe Chauvin,
95302, Cergy-Pontoise, FRANCE\\
$^{2}$ Departamento de Ingenier\'{i}a Matem\'{a}tica \\
Centro Modelamiento Matem\'{a}tico\\
UMI 2807, UCHILE-CNRS \\
Casilla 170-3 Correo 3, Santiago, CHILE.\\
E-mail: Thierry.Huillet@u-cergy.fr and smartine@dim.uchile.cl}
\maketitle

\begin{abstract}
We study properties of truncations in the dual and intertwining process in
the monotone case. The main properties are stated for the time-reversed
process  and the time of absorption of the truncated intertwining process.
\end{abstract}

\textbf{Running title}: Duality and Intertwining.\newline

\textbf{Keywords}: \textit{Duality; Intertwining; Siegmund dual; Truncation;
Pollak-Siegmund relation; sharp strong stationary time; sharp dual.}\newline

\textbf{MSC 2000 Mathematics Subject Classification}: 60 J 10, 60 J 70, 60 J
80.

\section{Introduction}

In this work we study truncation of stochastic kernels and their relation to
duality and intertwining.

In next section we recall these concepts, a main one being that when one
starts from an stochastic and positive recurrent matrix and one construct a
dual, then the associated intertwining matrix is associated to the
time-reversed matrix of the original one.

Thus, many of the concepts that are proven for the original matrices need
to be shown for the time-reversed matrix. This gives rise to a problem 
when dealing
with monotonicity because it is not necessarily invariant by 
time-reversing. On the
other hand monotonicity property plays a central role in duality and
intertwining because a nonnegative dual kernel exists for the Siegmund dual
function, and in this case several properties can be stated between the
original kernel and the dual and intertwining associated kernels.

In this framework our first result is to extend the Pollak-Siegmund relation
stated for the monotone process to their time-reversed process. In terms of
truncation this result asserts that whatever is the truncation, when the
level increases, the quasi-stationary distribution converges to the  
stationary distribution.
This is done in Proposition \ref{propx2} in Section \ref{Section11}.

In Section \ref{Section12} we introduce the truncation given by the mean
expected value, which satisfies that the truncation up to $N$ preserves the
stationary distribution up to $N-1$. In Proposition \ref{propx4} it is
shown that it has a nonnegative dual, and in Proposition \ref{propx5} it is
proven that the time of attaining the absorbing state is increasing with the
level, either for the dual and the intertwining matrices. For the
Diaconis-Fill coupling this means that the time for attaining the stationary
distribution is stochastically increasing with the truncation of the
reversed process.

In Proposition \ref{propx7} we compare the quasi-stationary behavior for the
Diaconis-Fill coupling with respect to the intertwining kernel, when in both
processes one avoids the absorbing state.

We recall some of the main general results in duality and intertwining in
Section \ref{SectionDI} and for the monotone case and the Siegmund kernel
this is done in Section \ref{SectionMS}. The strong stationary times and
some of its properties are given in Section \ref{SectionSST}, and the
Diaconis-Fill coupling is given in Section \ref{SectionDFC}.

\section{Duality and Intertwining}

\subsection{Notation}
\label{s1} 

Let $I$ be a countable set and $\mathbf{1}$ be the unit function
defined on $I$. A nonnegative matrix $P=(P(x,y):x,y\in I)$ is called a
kernel on $I$. A substochastic kernel is such that $P\mathbf{1}\le \mathbf{1}
$, it is stochastic when $P\mathbf{1}=\mathbf{1}$, and strictly
substochastic if it is substochastic and there exists some $x\in I$ such
that $P\mathbf{1}(x)<1$.

A kernel $P$ is irreducible if for any pair $x,y\in I$ there exists $n>0$
such that $P^{n}(x,y)>0$. A point $x_{0}\in I$ is absorbing for $P$ when 
$P(x_{0},y)=\delta_{y,x_{0}}$ for $y\in I$ ($\delta_{x_0,x_{0}}=1$ and 
$\delta _{y,x_{0}}=0$ otherwise).

Let $P$ be substochastic. Then, there exists a Markov chain 
$X=(X_{n}:n<\mathcal{T}^{X})$ uniquely defined in distribution, taking 
values in $I$,
with lifetime $\mathcal{T}^{X}$ and with transition kernel $P$. Take 
$\partial\not{\in }I$ and define $X_n=\partial$ for all 
$n\ge \mathcal{T}^{X}$. $P$ acts on the set of bounded or nonnegative 
real functions by 
$Pf(x)={\mathbb E}(f(X_{1})\, \mathbf{1}(\mathcal{T}^{X}>1))$ and 
$P^{n}f(x)={\mathbb E}(f(X_{n})\, \mathbf{1}(\mathcal{T}^{X}>n))$ 
for $n\ge 1$, $x\in I$.

In the sequel, we introduce three kernels, $P$, $\widehat{P}$, $\widetilde{P}
$; defined  respectively on the countable sets $I$, $\widehat{I}$, 
$\widetilde{I}$. When they are substochastic the associated Markov chains are
respectively denoted by $X$ $\widehat{X}$, $\widetilde{X}$ with lifetimes 
$\mathcal{T},\widehat{\mathcal{T}},\widetilde{\mathcal{T}}$. For the chain 
$X$ we note by $\tau _{K}=\inf \{n\ge 0:X_{n}\in K\}$ the hitting time of 
$K\subseteq I$ and put $\tau _{a}=\tau _{\{a\}}$ for $a\in I$. Analogously,
we define for $\widehat{X}$ (respectively for $\widetilde{X}$) the hitting
times ${\widehat{\tau }}_{\widehat{K}}$ and ${\widehat{\tau }}_{\widehat{a}}$
(respectively ${\widetilde{\tau }}_{{\widetilde{K}}}$ and 
${\widetilde{\tau }}_{\widetilde{a}}$).

\subsection{Definitions}

We recall the duality and the intertwining relations. As usual $M^{\prime}$
denotes the transpose of the matrix $M$, so $M^{\prime}(x,y)=M(y,x)$ for 
$x,y\in I$.

\begin{definition}
\label{def1} 
Let $P$ and $\widehat{P}$ be two kernels defined on the
countable sets $I$ and $\widehat{I}$, and let $H=(H(x,y):x\in I,y\in 
\widehat{I})$ be a matrix. Then $\widehat{P}$ is said to be a $H-$dual of 
$P$, and $H$ is called a duality function between $(P,\widehat{P})$, if it is
satisfied 
\begin{equation}
\label{eqxy1}
H\widehat{P}^{\prime }=PH.  
\end{equation}
$\Box $
\end{definition}

Duality is symmetric because if $\widehat{P}$ is a $H-$dual of $P$, then $P$
is a $H^{\prime }-$dual of $\widehat{P}$. We only consider nonnegative
duality functions $H$. Note that if $H$ is a duality function between 
$(P,\widehat{P})$, then $cH$ also is for all $c>0$. We assume that no 
row and no column of $H$ vanishes completely.

If $\widehat{P}$ is a $H-$dual of $P$, then $\widehat{X}$ is a $H-$dual of 
$X$, in the following sense 
$$
\forall \,x\in I,\,y\in \widehat{I},\,\forall \,n\ge 0\,: 
{\mathbb E}_{x}(H(X_{n},y))={\mathbb E}_{y}(H(x,\widehat{X}_{n})), 
$$
where $H$ is extended to $(I\cup \{\partial \})\times (\widehat{I}\cup
\{\partial \})$ by putting $H(x,\partial ) =H(\partial,y)
=H(\partial,\partial )=0$ for all $x\in I$, $y\in \widehat{I}$.

This notion of duality (\ref{eqxy1}) coincides with the one between Markov
processes found in references \cite{Lig}, \cite{MM} and \cite{DF}, among
others. Let us now introduce intertwining as in \cite{cpy}.

\begin{definition}
\label{def2} 
Let $P$ and $\widetilde{P}$ be two kernels defined on the
countable sets $I$ and $\widetilde{I}$ and let $\Lambda =(\Lambda (y,x):y\in 
\widetilde{I},x\in I)$ be a stochastic matrix. We say that $\widetilde{P}$
is a $\Lambda -$intertwining of $P$, and $\Lambda $ is called a link between 
$(P,\widetilde{P})$, if it is satisfied 
$$
\widetilde{P}\Lambda =\Lambda P.
$$
$\Box $
\end{definition}

Intertwining is not necessarily symmetric because $\Lambda^{\prime}$ is not
necessarily stochastic. If $\Lambda$ is doubly stochastic then $\widetilde{P}
$ a $\Lambda-$ intertwining of $P$ implies $P$ is a 
$\Lambda^{\prime}-$intertwining of $\widetilde{P}$. 
If $\widetilde{P}$ is a $\Lambda-$intertwining of $P$, 
then $\widetilde{X}$ is said to be a $\Lambda-$intertwining of $X$.

Throughout this paper we consider $I={\mathbb N}=\{1,2,...\}$, the set of
nonnegative integers in the infinite case or $I=I_{N}=\{1,\cdots ,N\}$ 
with $N\ge 2$ in the finite case.

We note by $\mathbf{e}_{a}$ a column vector with $0$ entries except for its 
$a-$th entry which is $1$.

For a vector $\rho \in \mathbf{R}^{I}$ we denote by $D_{\rho }$ the diagonal
matrix with diagonal entries $(D_{\rho })(x,x)=\rho (x)$, $x\in I$.

\textbf{Assumption.} From now on, $P=(P(x,y): x,y\in I)$ is assumed to be an
irreducible positive recurrent stochastic kernel and its stationary
distribution is noted by $\pi=(\pi(i): i\in I)>0$.

Let $\overleftarrow{P}$ be the time-reversed transition kernel of $P$, so 
$\overleftarrow{P}^{\prime}=D_{\pi }PD_{\pi }^{-1}$ Since 
$\overleftarrow{P}$
is also irreducible positive recurrent with stationary distribution 
$\pi^{\prime}$, we can exchange the roles of $P$ and $\overleftarrow{P}$.

\begin{remark}
\label{remk1} 
Let $\widetilde{P}$ be a $\Lambda -$intertwining of $P$. Since 
$\Lambda $ is stochastic, when $\widetilde{\beta }^{\prime }$ is a
stationary probability measure of $\widetilde{P}$ then 
$\widetilde{\beta }^{\prime }\Lambda $ is a stationary probability measure 
of $P$. Therefore, if $\widetilde{a}$ is an absorbing state for $\widetilde{P}$, 
then we necessarily have, 
\begin{equation}
\label{eqxy2}
\pi^{\prime }=\mathbf{e}_{\widetilde{a}}^{\prime }\Lambda. 
\end{equation}
$\Box $
\end{remark}

\section{Relations}
\label{SectionDI}

Below we supply Theorem \ref{theo1} shown in \cite{HM1}
which summarizes several relations on duality and intertwining.

\begin{theorem}
\label{theo1} 
Let $P$ be an irreducible positive recurrent stochastic kernel
with stationary distribution $\pi ^{\prime }$. Assume $\widehat{P}$ is a
kernel which is $H-$dual of $P$, $H\widehat{P}^{\prime }=PH$. Then:

$(i)$ $\widehat{P}H^{\prime }D_{\pi }=H^{\prime }D_{\pi }\overleftarrow{P}$;

$(ii)$ $\varphi :=H^{\prime }\pi $ is strictly positive and satisfies 
$\widehat{P}\varphi =\varphi $;

When $\widehat{P}$ is stochastic and irreducible then $\varphi =c\mathbf{1}$
for some $c>0$ and $\widetilde{P}=\widehat{P}$.

$(iii)$ $\widetilde{P}=D_{\varphi }^{-1}\widehat{P}D_{\varphi }$ is a
stochastic kernel (defined on $\widetilde{I}=\widehat{I})$ and the matrix 
$\Lambda :=D_{\varphi }^{-1}H^{\prime }D_{\pi }$ is stochastic. Moreover 
$\widetilde{P}$ is a $\Lambda -$intertwining of $\overleftarrow{P}$. Hence,
it holds 
$$
\widetilde{P}\Lambda =\Lambda \overleftarrow{P}\, 
\text{ satisfying } 
\widetilde{P}\mathbf{1}=\mathbf{1}=\Lambda \mathbf{1}.
$$

Now assume $I$ and $\widehat{I}$ are finite sets.

$(iv)$ If $\widehat{P}$ is strictly substochastic then it is not irreducible.

$(v)$ If $\widehat{P}$ is strictly substochastic and has a unique stochastic
class $\widehat{I}_{\ell }$, then 
$$
\frac{\varphi (x)}{\varphi (y)}=
{\mathbb P}_{x}(\widehat{\tau }_{\widehat{I}_{\ell}}
<\widehat{\mathcal{T}})\;\text{for any }y\in \widehat{I}_{\ell },
$$
and the intertwined Markov chain $\widetilde{X}$ is given by the Doob
transform 
\begin{equation}
\label{eqxy3}
{\mathbb P}_{x}(\widetilde{X}_{1}=y_{1},\cdots ,\widetilde{X}_{k}
=y_{k})={\mathbb P}_{x}(\widehat{X}_{1}=y_{1},\cdots ,
\widehat{X}_{k}=y_{k}\,|\, \widehat{\tau }_{\widehat{I}_{\ell }}
<\widehat{\mathcal{T}}).  
\end{equation}

$(vi)$ If $\widehat{a}$ is an absorbing state for $\widehat{P}$ then 
$\widehat{a}$ is an absorbing state for $\widetilde{P}$ and the relation 
(\ref{eqxy2}) $\pi^{\prime }=\mathbf{e}_{\widehat{a}}^{\prime }\Lambda $, 
is satisfied.

$\Box $
\end{theorem}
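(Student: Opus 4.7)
The plan is to derive parts (i)--(iii) from a single manipulation of the duality identity, to handle (iv) and (vi) as short spectral and conjugation arguments, and to treat (v) as a Doob $h$-transform calculation whose core is the identification of $\varphi$ with a hitting probability.

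For (i), I would transpose $H\widehat{P}'=PH$ to get $\widehat{P}H'=H'P'$, then multiply on the right by $D_\pi$ and use the identity $P'D_\pi=D_\pi\overleftarrow{P}$, which is just $\overleftarrow{P}'=D_\pi P D_\pi^{-1}$ rewritten. This is the reversal step that brings $\overleftarrow{P}$ into the picture. For (ii), applying (i) to $\mathbf{1}$ and using $\overleftarrow{P}\mathbf{1}=\mathbf{1}$ immediately gives $\widehat{P}\varphi=\varphi$; strict positivity of $\varphi(x)=\sum_y H(y,x)\pi(y)$ is inherited from $\pi>0$, $H\ge 0$ and the standing assumption that no column of $H$ vanishes. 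When $\widehat{P}$ is stochastic and irreducible, the positive harmonic function $\varphi$ must be constant by the maximum principle, whence $\widetilde{P}=\widehat{P}$. For (iii), the stochasticity statements $\widetilde{P}\mathbf{1}=\mathbf{1}$ and $\Lambda\mathbf{1}=\mathbf{1}$ are restatements of (ii) and of the definition of $\varphi$, and the intertwining is a one-line chain $\widetilde{P}\Lambda=D_\varphi^{-1}\widehat{P}H'D_\pi=D_\varphi^{-1}H'D_\pi\overleftarrow{P}=\Lambda\overleftarrow{P}$ invoking (i) in the middle.

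For (iv), on a finite state space the existence of a strictly positive fixed point of an irreducible strictly substochastic matrix is ruled out by Perron--Frobenius: the spectral radius of an irreducible nonnegative matrix lies strictly below the largest row sum unless all row sums coincide, so the Perron eigenvalue of such a $\widehat{P}$ would be below $1$, contradicting $\widehat{P}\varphi=\varphi$ with $\varphi>0$. For (vi), conjugation by $D_\varphi$ preserves the indicator shape of an absorbing row, giving $\widetilde{P}(\widehat{a},y)=\varphi(\widehat{a})^{-1}\varphi(y)\widehat{P}(\widehat{a},y)=\delta_{\widehat{a},y}$, after which Remark \ref{remk1} supplies $\pi'=\mathbf{e}_{\widehat{a}}'\Lambda$.

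The main obstacle is (v), because here one has to produce a probabilistic meaning for $\varphi$ beyond the purely algebraic relation $\widehat{P}\varphi=\varphi$. The plan is to identify $\varphi$, up to the constant $\varphi(y)$ (well-defined because $\widehat{P}$ restricted to the stochastic class $\widehat{I}_\ell$ is an irreducible stochastic matrix on which the harmonic $\varphi$ is necessarily constant), with the absorption-avoidance probability $x\mapsto\mathbb{P}_x(\widehat{\tau}_{\widehat{I}_\ell}<\widehat{\mathcal{T}})$. Both functions are bounded $\widehat{P}$-harmonic with value $1$ on $\widehat{I}_\ell$; on the finite transient complement, where the restricted kernel has spectral radius strictly less than $1$, such a bounded solution is unique, forcing the identification. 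Once $\varphi$ is known to be this hitting probability, (\ref{eqxy3}) follows immediately since $\widetilde{P}=D_\varphi^{-1}\widehat{P}D_\varphi$ is by construction the Doob $\varphi$-transform of $\widehat{P}$, whose path distribution is exactly that of $\widehat{X}$ conditioned on $\{\widehat{\tau}_{\widehat{I}_\ell}<\widehat{\mathcal{T}}\}$.
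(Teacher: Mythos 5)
The paper itself offers no proof of Theorem \ref{theo1}; it is quoted from \cite{HM1}, so there is no internal argument to compare yours against. Taken on its own terms, your proposal is essentially correct and follows the standard route: (i) is indeed just transposition of $H\widehat{P}'=PH$ followed by $P'D_\pi=D_\pi\overleftarrow{P}$; (ii) and (iii) follow by applying this to $\mathbf{1}$ and by the one-line conjugation computation; (iv) is the Perron--Frobenius argument (note the harmless case split: if all row sums coincide they are all $<1$, otherwise the spectral radius is strictly below the maximal row sum, and in either case a strictly positive eigenvector with eigenvalue $1$ is impossible); (vi) is the conjugation of the absorbing row plus Remark \ref{remk1} applied to the intertwining with $\overleftarrow{P}$, whose stationary law is again $\pi'$. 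For (v), your identification of $\varphi$ (normalized to be $1$ on $\widehat{I}_\ell$, where it is constant because the restriction of $\widehat{P}$ to the stochastic class is finite, irreducible and stochastic) with $x\mapsto\mathbb{P}_x(\widehat{\tau}_{\widehat{I}_\ell}<\widehat{\mathcal{T}})$ via uniqueness of the harmonic extension off $\widehat{I}_\ell$ is sound; you should, however, make explicit why the restriction $Q$ of $\widehat{P}$ to $\widehat{I}\setminus\widehat{I}_\ell$ has spectral radius $<1$ --- this follows from uniqueness of the stochastic class by the same Perron--Frobenius reasoning as in (iv), applied classwise --- after which $g=Qg$ forces $g=0$ and the Doob-transform computation telescopes to (\ref{eqxy3}).

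The one genuine soft spot is the secondary claim in (ii): parts (i)--(iii) are stated for countable $I$ and $\widehat{I}$ (finiteness is only assumed from (iv) on), and your appeal to ``the maximum principle'' to conclude $\varphi=c\mathbf{1}$ when $\widehat{P}$ is stochastic and irreducible only works when the maximum of $\varphi$ is attained, i.e.\ in the finite case. On a countably infinite state space $\varphi=H'\pi$ need not be bounded, and an irreducible stochastic but transient kernel can carry non-constant positive harmonic functions, so constancy does not follow from irreducibility alone; the standard fix is to invoke recurrence of $\widehat{P}$ (every nonnegative superharmonic function of an irreducible recurrent chain is constant) or to restrict this assertion to the finite setting in which it is later used. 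As it stands, that sentence of your argument proves the claim only under an additional hypothesis that you should state.
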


One has 
$$
\Lambda (x,y)=\varphi(x)^{-1} H(y,x) \pi(y), 
$$
in particular $\Lambda (x,y)=0$ if and only if $H(y,x)=0$.

Notice that when $\varphi >0$, the equality $\widetilde{P}=
D_{\varphi }^{-1}\widehat{P}D_{\varphi }$ implies that the sets of 
absorbing points for $\widehat{P}$ and $\widetilde{P}$ coincide.

\begin{remark}
\label{remk2} 
When the starting equality between stochastic kernels is the
intertwining relation $\widetilde{P}\Lambda =\Lambda \overleftarrow{P}$,
then we have the duality relation $H\widehat{P}^{\prime }=PH$ with 
$H=D_{\pi}^{-1}\Lambda ^{\prime }$ and $\widehat{P}=\widetilde{P}$. In 
this case $\varphi=\mathbf{1}$. $\Box $
\end{remark}

The next result of having a constant column appears as a condition in the
study of sharp duals, see Remark \ref{rmk5} in Section \ref{SectionSST}.

\begin{proposition}
\label{propx1} 
Assume $H$ is nonsingular and has a strictly positive
constant column, that is 
$$
\exists \widehat{a}\in \widehat{I}:H\mathbf{e}_{\widehat{a}}=
c\mathbf{1}\,\text{for some }\,c>0.
$$
Then:

$(i)$ $\widehat{a}$ is an absorbing state for $\widehat{P}$ 
(so $\{\widehat{a}\}$ is a stochastic class).

$(ii)$ Under the hypotheses of Theorem \ref{theo1}, $\pi^{\prime }=
\mathbf{e}_{\widehat{a}}^{\prime }\Lambda $ holds and if $\widehat{P}$ is strictly
substochastic and $\{\widehat{a}\}$ is the unique stochastic class then 
${\mathbb P}_{y}(\widehat{\tau }_{\widehat{a}}<\widehat{\mathcal{T}})=
\varphi(y)/\varphi (\widehat{a})$ and the relation (\ref{eqxy3}) is 
satisfied. $\,\Box $
\end{proposition}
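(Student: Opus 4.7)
The plan is to derive $(i)$ directly from the duality equation combined with the nonsingularity of $H$, and then deduce $(ii)$ as a direct consequence of the relevant parts of Theorem \ref{theo1}.

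For $(i)$, I would start from the duality relation $H\widehat{P}^{\prime} = PH$ and read off the column indexed by $\widehat{a}$ on both sides. Defining the column vector $u$ with entries $u(y) = \widehat{P}(\widehat{a}, y)$, the $\widehat{a}$-th column of $H\widehat{P}^{\prime}$ is precisely $Hu$, while the $\widehat{a}$-th column of $PH$ is $P(H\mathbf{e}_{\widehat{a}}) = P(c\mathbf{1}) = c\mathbf{1}$, using that $P$ is stochastic. Thus $Hu = c\mathbf{1} = H\mathbf{e}_{\widehat{a}}$, i.e.\ $H(u - \mathbf{e}_{\widehat{a}}) = 0$, and the nonsingularity of $H$ yields $u = \mathbf{e}_{\widehat{a}}$. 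This gives $\widehat{P}(\widehat{a}, y) = \delta_{y, \widehat{a}}$, so $\widehat{a}$ is absorbing for $\widehat{P}$, and in particular $\{\widehat{a}\}$ is a stochastic class.

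For $(ii)$, the first conclusion $\pi^{\prime} = \mathbf{e}_{\widehat{a}}^{\prime}\Lambda$ will be immediate from $(i)$ combined with Theorem \ref{theo1}$(vi)$, which applies precisely when $\widehat{a}$ is absorbing for $\widehat{P}$. Under the additional assumption that $\widehat{P}$ is strictly substochastic with $\{\widehat{a}\}$ as its unique stochastic class, Theorem \ref{theo1}$(v)$ applies directly with $\widehat{I}_{\ell} = \{\widehat{a}\}$, simultaneously yielding $\mathbb{P}_{y}(\widehat{\tau}_{\widehat{a}} < \widehat{\mathcal{T}}) = \varphi(y)/\varphi(\widehat{a})$ and the Doob transform representation (\ref{eqxy3}) of $\widetilde{X}$.

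The only real work lies in $(i)$; once it is in place, $(ii)$ is essentially an invocation of Theorem \ref{theo1}. The main subtlety to watch in $(i)$ is the legitimacy of reading off a single column of the matrix identity $H\widehat{P}^{\prime} = PH$ in the countable setting, which requires absolute convergence of the sums $\sum_{y} H(x,y)\widehat{P}(\widehat{a}, y)$, a convention implicit throughout the paper; in the finite-state framework targeted by parts $(iv)$--$(vi)$ of Theorem \ref{theo1} this issue does not arise, and the argument reduces to elementary linear algebra.
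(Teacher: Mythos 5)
Your proof is correct: reading off the $\widehat{a}$-th column of $H\widehat{P}^{\prime}=PH$ to get $H\widehat{P}^{\prime}\mathbf{e}_{\widehat{a}}=P(c\mathbf{1})=c\mathbf{1}=H\mathbf{e}_{\widehat{a}}$ and invoking nonsingularity is exactly the standard argument for $(i)$, and $(ii)$ is indeed nothing more than an application of Theorem \ref{theo1} $(v)$--$(vi)$ with $\widehat{I}_{\ell}=\{\widehat{a}\}$. The paper itself states this proposition without a written proof (it is imported from the authors' earlier work), and your remark about column-summability in the countable setting versus the finite setting of Theorem \ref{theo1} $(iv)$--$(vi)$ is an appropriate caveat.
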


\begin{remark}
\label{remk3} 
Duality functions with constant columns appear in the
following situations. If $x_{0}\in I$ is an absorbing point of the kernel $P$
and $\widehat{P}$ is a substochastic kernel that is a $H-$dual of $P$, then 
$h(y):=H(x_{0},y)$, $y\in \widehat{I}$, is a nonnegative 
$\widehat{P}-$harmonic function. So, when $H$ is bounded and 
$\widehat{P}$ is a stochastic
recurrent kernel, the $x_{0}-$row $H(x_{0},\cdot )$ is constant. $\Box $
\end{remark}

\begin{remark}
\label{remk33}
The time-reversed transition kernel $\overleftarrow{P}$ can always 
be put as a Doob transform
$\overleftarrow{P}=D_{\varphi} P^{\prime} D_{\varphi }^{-1}$ with
$\varphi(x)=\PP_x(\tau>1)$ for some stopping time $\tau$. In this case 
$\varphi=\pi$ so we must only define $\tau$. For the Markov chain 
$X=(X_n)$ with transition matrix $P$ it exists a 
set of random function $(U_{n,x}: n\ge 1, i\in I)$ taking values 
in $I$ and independent for different $n$, such that 
$X_{n+1}=U_{n+1,X_n}$ for all $n\ge 0$. 
Now take a collection of independent identically 
distributed random variables $({\cal J}_n: n \ge 1)$
distributed as $\pi$,  
and define the Markov chain $Y=(Y_n: n\ge 0)$ by
$$
Y_{n+1}={\cal J}_{n+1} \cdot {\bf 1}(U_{n+1,{\cal J}_{n+1}}=Y_n)+
\partial \cdot {\bf 1}(U_{n+1,{\cal J}_{n+1}}\neq Y_n), \, n\ge 0,
$$
where $\partial\not\in I$ is an absorbing state for $Y$.
For $x,y\in I$ we have $\PP(Y_{n+1}=y, Y_n=x)=\pi(y)\PP(U_{n+1,y}=x)=\pi(y) P(y,x)$.
Let $\tau=\inf(n\ge 1: Y_n=\partial)$, then we have $\PP_x(\tau>1)=\pi(x)$ and
so $\PP_x(Y_{1}=y \, | \, \tau>1)=\pi(y) P(y,x) \pi(x)^{-1}$. Then 
$(Y_n: n\ge 0)$ is a Markov chain such that for all $n\ge 1$ one has 
$\PP_x(Y_{n}=y \, | \, \tau>n)= \overleftarrow{P}^{(n)}(x,y)$.
\end{remark}

\section{Monotonicity and the Siegmund kernel}
\label{SectionMS} 

The Siegmund kernel, see \cite{Sieg}, is defined by 
$$
H^S(x,y)=\mathbf{1}(x\le y), x,y\in I.
$$
This kernel is nonsingular and $(H^S)^{-1}=Id-R$ with 
$R(x,y)=\mathbf{1}(x+1=y)$ (which is a strictly substochastic kernel 
because the $N-$th row vanishes), so 
$(H^S)^{-1}(x,y)=\mathbf{1}(x=y)-\mathbf{1}(x+1=y)$ and 
$H^S=(Id-R)^{-1}$ is the potential matrix associated to $R$. In \cite{Sieg}
the Siegmund duality was used to show the equivalence between absorbing and
reflecting barrier problems for stochastically monotone chains.

Let $\widehat{P}$ be a substochastic kernels such that 
$H^{S}\widehat{P}^{\prime}=PH^{S}$. Since 
$(H^{S}\widehat{P}^{\prime })(x,y)=\sum_{z\ge x}\widehat{P}(y,z)$ 
and $(PH^{S})(x,y)=\sum_{z\le y}P(x,z)$, they must satisfy 
\begin{equation}
\label{eqxy4}
\widehat{P}(y,x)=\sum\limits_{z\ge x}\widehat{P}(y,z)-
\sum\limits_{z>x}\widehat{P}(y,z)
=\sum\limits_{z\le y}(P(x,z)-P(x+1,z)),\;x,y\in I.
\end{equation}
In particular, the condition $\widehat{P}\ge 0$ requires the monotonicity 
of $P$, 
$$
\forall y\in I\,:\;\sum\limits_{z\le y}P(x,z)\text{ decreases in }\,x\in I.
$$
From $P(1,1)<1$ one gets that $\widehat{P}$ loses mass through $1$.
Moreover, if $r$ is the smallest integer such that $\sum_{z\le r}P(1,z)=1$,
then $\widehat{P}$ loses mass through $\{x<r\}$ and it does not lose mass
through $\{r,\cdots ,N\}$. By applying Theorem \ref{theo1} one gets 
$$
\varphi (x)=(H^{S})^{\prime }\pi (x)=\sum\limits_{y\in I}\mathbf{1}
(y\le x)\pi (y)=\sum\limits_{y\le x}\pi (y)=:\pi ^{c}(x),
$$
the cumulative distribution of $\pi $, which is not constant because 
$\pi>0$.

Consider the finite case with $I=\widehat{I}=\widetilde{I}=I_{N}$, $N\ge 2$.
If $P$ is substochastic then the equality $\widehat{P}(N,x)=
\sum\limits_{z\le N}(P(x,z)-P(x+1,z))$ 
implies $\widehat{P}(N,x)=\delta _{x,N}$, so $N$ is
an absorbing state for $\widehat{P}$. It can be checked that $N$ is the
unique absorbing state for $\widehat{P}$. We can summarize the above
analysis by the following result.

\begin{corollary}
\label{coro1} 
Let $H^{S}$ be the Siegmund kernel, $P$ be a monotone
irreducible positive recurrent stochastic kernel with stationary
distribution $\pi ^{\prime }$. Let $H^{S}\widehat{P}^{\prime }=PH^{S}$ with 
$\widehat{P}\ge 0$. Then:

$(i)$ $\varphi =\pi ^{c}$ and the stochastic intertwining kernel $\Lambda$
satisfies 
\begin{equation}
\label{eqxy5}
\Lambda (x,y)=\mathbf{1}(x\ge y)\frac{\pi (y)}{\pi ^{c}(x)}. 
\end{equation}
The intertwining matrix $\widetilde{P}$ of $\overleftarrow{P}$, that
verifies $\widetilde{P}\Lambda =\Lambda \overleftarrow{P}$, is given by 
\begin{equation}
\label{eqxy6}
\widetilde{P}(x,y)=\widehat{P}(x,y)\frac{\pi ^{c}(y)}{\pi^{c}(x)},
\;x,y\in I. 
\end{equation}

Now assume $I=I_{N}$, then,

$(ii)$ $\widehat{P}$ is strictly substochastic and loses mass through $1$,
and parts $(iv)$, $(v)$ and $(vi)$ of Theorem \ref{theo1} hold.

$(iii)$ $N$ is the unique absorbing state for $\widehat{P}$ (and for 
$\widetilde{P}$), Theorem \ref{theo1} parts $(v)$ and $(vi)$ are fulfilled
with $\widehat{I}_{\ell }=\{N\}$ and $\widehat{a}=N$. In particular 
$\pi^{\prime }=\mathbf{e}_{N}^{\prime }\Lambda $ holds.

$(iv)$ The following relation is satisfied 
\begin{equation}
\label{eqxy7}
\Lambda \mathbf{e}_{N}=\pi (N)\mathbf{e}_{N}\,. 
\end{equation}
$\Box $
\end{corollary}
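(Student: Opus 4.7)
All four parts follow by combining formulas already established in the preamble to the corollary with direct substitution into the conclusions of Theorem \ref{theo1}; the only step that requires a genuine argument is the uniqueness of the absorbing state in part (iii), which I expect to be the main obstacle.

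For part (i), I would start from the fact, already computed in the preamble via Theorem \ref{theo1}(ii) applied to $H=H^S$, that $\varphi = \pi^c$. Writing $(H^S)'(x,y) = H^S(y,x) = \mathbf{1}(x \ge y)$ and substituting into $\Lambda = D_\varphi^{-1}(H^S)'D_\pi$ and $\widetilde{P} = D_\varphi^{-1}\widehat{P}D_\varphi$ from Theorem \ref{theo1}(iii) yields (\ref{eqxy5}) and (\ref{eqxy6}) entry by entry.

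For parts (ii) and (iii), strict substochasticity of $\widehat{P}$ at state $1$ is essentially observed in the preamble: summing (\ref{eqxy4}) over $x \in I_N$ telescopes to $\sum_{z \le y} P(1,z)$, which at $y=1$ equals $P(1,1) < 1$ by irreducibility of $P$ together with $N \ge 2$. Once $\widehat{P}$ is strictly substochastic, parts (iv)--(vi) of Theorem \ref{theo1} become applicable. That $N$ is absorbing is immediate from (\ref{eqxy4}) with $y = N$ and the convention $P(N+1,\cdot) \equiv 0$: one gets $\widehat{P}(N,x) = \sum_{z \le N}(P(x,z) - P(x+1,z)) = \delta_{x,N}$. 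For uniqueness, I would suppose a second absorbing state $y_0 < N$ existed, express $\widehat{P}(y_0,\cdot) = \mathbf{e}_{y_0}'$ via (\ref{eqxy4}), and telescope: the vanishing conditions for $x < y_0$ combined with $\widehat{P}(y_0,y_0) = 1$ force $\sum_{z \le y_0} P(x,z) = 1$ for all $x \le y_0$, while the vanishing conditions for $x > y_0$ together with $\widehat{P}(y_0,N) = 0$ force $\sum_{z \le y_0} P(x,z) = 0$ for all $x > y_0$. This means $\{1,\ldots,y_0\}$ and $\{y_0+1,\ldots,N\}$ are both $P$-closed, contradicting irreducibility. Once uniqueness is secured, $\widehat{I}_\ell = \{N\}$ and $\widehat{a} = N$ follow, and $\pi' = \mathbf{e}_N' \Lambda$ is Theorem \ref{theo1}(vi).

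For part (iv), I would evaluate $(\Lambda \mathbf{e}_N)(x) = \Lambda(x,N) = \mathbf{1}(x \ge N)\pi(N)/\pi^c(x)$ directly from (\ref{eqxy5}): since every $x \in I_N$ satisfies $x \le N$, this vanishes for $x < N$, while at $x = N$ it equals $\pi(N)/\pi^c(N) = \pi(N)$ because $\pi^c(N) = 1$. Hence $\Lambda \mathbf{e}_N = \pi(N)\mathbf{e}_N$. The only delicate step in the whole argument is the closed-class contradiction in part (iii); everything else is substitution and bookkeeping.
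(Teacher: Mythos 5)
Your proposal is correct and follows essentially the same route as the paper, whose "proof" of Corollary \ref{coro1} is just the analysis preceding it (substitution of $H^S$ into Theorem \ref{theo1}, the telescoping row sums of (\ref{eqxy4}), and $\widehat{P}(N,x)=\delta_{x,N}$). Your closed-class argument for uniqueness of the absorbing state correctly fills in the detail the paper dismisses with ``it can be checked''; note only that, like the paper, you do not verify that $\{N\}$ is the \emph{unique stochastic class} (as opposed to the unique absorbing state), which is what Theorem \ref{theo1}$(v)$ formally requires.
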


\section{Truncation for Monotone kernels}
\label{Section1}

The purpose of this section is to see how the truncations
behaves with the duality relation.

\subsection{The Pollak-Siegmund limit for the reversed chain}
\label{Section11}

Since $P$ has stationary distribution $\pi ^{\prime }$ it holds 
$\lim\limits_{n\to \infty }{\mathbb P}_{x}(X_{n}=y)=\pi (y)$ for 
$y\in {\mathbb N}$. Pollak and Siegmund proved in \cite{PS} that 
if $P$ is also monotone then, 
\begin{equation}
\label{eqxy8}
\forall x,y\in {\mathbb N}:\quad \lim\limits_{n,N\to \infty }
{\mathbb P}_{x}(X_{n}=y\,|\,\tau _{(N)}>n)=\pi (y),  
\end{equation}
where $\tau_{(N)}$ is the hitting times of the domain $R_{N}=\{z\ge N\}$ by
the chain $X$. So, a truncation at a sufficiently high level, will have a
stationary distribution close to the one of the original process.

Now, in the framework of Theorem \ref{theo1} the intertwining relation 
$\widetilde{P}\Lambda =\Lambda \overleftarrow{P}$ is constructed from a
duality relation, and so $\overleftarrow{P}$ plays the role of $P$. This
leads us to show the Pollak-Siegmund relation for the reversed kernel 
$\overleftarrow{P}(x,y)=\pi (x)^{-1}P(y,x)\pi (y)$.

\medskip

First note that for any path $x_{0},...,x_{n}$ it holds 
\begin{equation}
\label{eqxy9}
\prod_{k=0}^{n-1}\overleftarrow{P}^{n}(x_{k},x_{k+1})=
\pi(x_{0})^{-1}
\left(\prod_{k=0}^{n-1}P^{n}(x_{k+1},x_{k})\right) \pi (x_{n}). 
\end{equation}
Let $\overleftarrow{\tau }_{(N)}$ be the hitting times of $R_{N}=\{z\ge N\}$
by the chain $\overleftarrow{X}$. From (\ref{eqxy9}) one gets 
\begin{eqnarray*}
&{}&{\mathbb P}_{x}(\overleftarrow{X}_{n}=z,
\overleftarrow{\tau }_{(N)}>n)=\pi(x)^{-1}\pi(z)
{\mathbb P}_{z}(\tau _{(N)}>n,X_{n}=x)\hbox{ and } \\
&{}&{\mathbb P}_{x}(\overleftarrow{\tau }_{(N)}>n)=
\pi (x)^{-1}\sum_{z\in {\mathbb N}}\pi(z){\mathbb P}_{z}
(\tau _{(N)}>n,X_{n}=x).
\end{eqnarray*}
Then, 
\begin{eqnarray}
\label{eqxy10}
{\mathbb P}_{x}(\overleftarrow{X}_{n}=y\,|\,\overleftarrow{\tau }_{(N)}>n)
&=&\frac{\pi (y){\mathbb P}_{y}(\tau _{(N)}>n,X_{n}=x)}
{\sum_{z\in {\mathbb N}}\pi (z){\mathbb P}_{z}(\tau _{(N)}>n,X_{n}=x)} \\
\nonumber
&=&\frac{\pi (y){\mathbb P}_{y}(X_{n}=x\,|\,\tau _{(N)}>n)}
{\sum_{z\in {\mathbb N}}\pi (z){\mathbb P}_{z}(X_{n}=x\,|\,\tau _{(N)}>n)
\frac{{\mathbb P}_{z}(\tau _{(N)}>n)}{{\mathbb P}_{y}(\tau _{(N)}>n)}}.  
\end{eqnarray}

We recall $P$ monotone means that for $z\ge y$ all $N\ge 1$ it is satisfied 
${\mathbb P}_y(X_1<N)\ge {\mathbb P}_z(X_1<N)$ for all $N\ge 1$. This implies
for all $r\ge 1$ we have ${\mathbb P}_y(X_r<N)\ge {\mathbb P}_z(X_r<N)$ when 
$y\le z$. Monotonicity is also equivalent to the fact that for all
decreasing bounded function $h:{\mathbb N}\to {\mathbb R}$ one has 
${\mathbb E}_y(h(X_1))\ge {\mathbb E}_z(h(X_1))$ when $y\le z$.

\begin{lemma}
\label{lemma1} 
Assume $P$ is monotone. Then, for all $N\ge 1$ one has 
\begin{equation}
\label{eqxy11}
\forall y\le z\le N,\forall n\ge 1: \quad 
{\mathbb P}_{y}(\tau _{(N)}>n)\ge {\mathbb P}_{z}(\tau_{(N)}>n). 
\end{equation}
\end{lemma}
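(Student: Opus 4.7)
I would prove the stronger statement that for every $n\ge 1$ the function
$$
f_n:\NN\to\RR,\qquad f_n(y):=\PP_y(\tau_{(N)}>n),
$$
is decreasing on all of $\NN$ (not merely on $\{1,\dots,N-1\}$). The induction is on $n$; the key bookkeeping observation is that $f_n(y)=0$ whenever $y\ge N$, because $\tau_{(N)}=0$ on $\{X_0\in R_N\}$. Once $f_n$ is globally decreasing on $\NN$, the claim (\ref{eqxy11}) is an immediate specialization to $y\le z\le N$.

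\textbf{Base case $n=1$.} Here $f_1(y)=\mathbf{1}(y<N)\,\PP_y(X_1<N)$. For $y\le z$ with $z<N$, the monotonicity assumption applied to the decreasing indicator $\mathbf{1}(\,\cdot<N)$ gives $\PP_y(X_1<N)\ge\PP_z(X_1<N)$, hence $f_1(y)\ge f_1(z)$. For $z\ge N$ we simply have $f_1(z)=0\le f_1(y)$.

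\textbf{Inductive step.} Assume $f_n$ is decreasing on $\NN$. By the Markov property at time $1$,
$$
f_{n+1}(y)=\mathbf{1}(y<N)\,\EE_y\!\left[\mathbf{1}(X_1<N)\,f_n(X_1)\right]=\mathbf{1}(y<N)\,\EE_y[f_n(X_1)],
$$
where the last equality uses that $f_n$ vanishes on $R_N$. Since $f_n$ is a bounded decreasing function on $\NN$, the characterization of monotonicity recalled just before the lemma (that $y\mapsto\EE_y[h(X_1)]$ is decreasing for every bounded decreasing $h$) yields that $y\mapsto\EE_y[f_n(X_1)]$ is decreasing in $y$. For $y\le z<N$ this gives $f_{n+1}(y)\ge f_{n+1}(z)$; for $z\ge N$ we again use $f_{n+1}(z)=0$.

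\textbf{Anticipated obstacle.} The only subtle point is that the naive inductive hypothesis—that $f_n$ is decreasing on $\{1,\dots,N-1\}$—is not quite strong enough to combine with the monotonicity of $P$, because the Markov step integrates $f_n(X_1)$ against a measure supported on all of $\NN$. Extending $f_n$ by zero on $R_N$ removes this difficulty and keeps the extended function decreasing, which is exactly what monotonicity of $P$ needs to propagate the inequality from step $n$ to step $n+1$. Beyond that, everything is routine first-step analysis.
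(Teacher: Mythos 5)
Your proof is correct and takes essentially the same route as the paper's: an induction that propagates the inequality via the Markov property, using the characterization that a monotone kernel turns bounded decreasing functions into decreasing expectations, with the survival probability extended by zero on $R_N$ so that it remains globally decreasing on $\NN$. The only organizational difference is that you induct on $n$ by peeling off the first step (so only one-step monotonicity is invoked), while the paper inducts on the number of constraints $k$ and conditions at time $n-k$, implicitly using multi-step monotonicity.
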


\begin{proof}
Let $N$ be fixed. We will show by recurrence on $k\ge 0$ that for all $n>k$
one has
\begin{equation}
\label{eqxy12}
\PP_y(X_{n-k}<N,..,X_n<N)\ge \PP_z(X_{n-k}<N,..,X_n<N).
\end{equation}
The inequality for $k=0$, $\PP_y(X_n<N)\ge 
\PP_z(X_n<k)$ holds for all $n>0$ because $P$ is monotone 
and $y\le z$.
Let us show it for $k\ge 1$ and all $n>k$. So, we may 
assume we have shown it up to $k-1$ and all $n>k-1$. 
From the Markov property we have
$$
\PP_x(X_{n-k}<N,..,X_n<N)=\sum_{u<N} 
\PP_u(X_1<N,..X_k<N)\PP_x(X_{n-k}=u).
$$
We claim that the function $h$ defined by $h(u)=\PP_u(X_1<N,..,X_k<N)$ 
for $u<N$ and $h(u)=0$ for $u\ge N$, is decreasing in $u$. In fact
this is exactly the induction hypothesis for $k-1$ when one takes 
$n=k$ in (\ref{eqxy12}). Then
$\EE_y(h(X_{n-1})\ge \EE_z(h(X_{n-1})$, which is the
inequality we want to prove: 
$\sum_{u<N} \PP_u(X_1<N)\PP_y(X_{n-1}=u)\ge \sum_{u<N}
\PP_u(X_1<N)\PP_z(X_{n-1}=u)$.

Then, relation (\ref{eqxy11}) is shown because
(\ref{eqxy12}) for $k=n-1$ is equivalent to
$$
\PP_y(\tau_{(N)}\!>\!n)\!=\!\PP_y(X_{n-k}\!<\!N,..,X_n\!<\!N)\ge 
\PP_z(X_{n-k}\!<\!N,..,X_n\!<\!N)\!=\!\PP_z(\tau_{(N)}\!>\!n).
$$
\end{proof}

Let us now show a ratio limit result.

\begin{lemma}
\label{lemma2} 
Let $P$ be a monotone irreducible positive recurrent
stochastic kernel with stationary distribution $\pi ^{\prime }$. We have 
\begin{equation}
\label{eqxy13}
\forall y,z\in {\mathbb N}:\quad \lim\limits_{n,N\to \infty }
\frac{{\mathbb P}_{y}(\tau _{(N)}>n)}
{{\mathbb P}_{z}(\tau _{(N)}>n)}=1. 
\end{equation}
\end{lemma}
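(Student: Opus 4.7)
The plan is to sandwich the ratio $\PP_y(\tau_{(N)}>n)/\PP_z(\tau_{(N)}>n)$ between quantities converging to $1$. Since interchanging $y$ and $z$ inverts this ratio, I may assume without loss of generality that $y\le z$. Then Lemma \ref{lemma1} immediately gives $\PP_y(\tau_{(N)}>n)\ge \PP_z(\tau_{(N)}>n)$ as soon as $N\ge z$, so $\liminf$ of the ratio is at least $1$ for free; only the matching upper bound requires work.

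For the upper bound I would apply the strong Markov property at $\sigma:=\tau_y$, the first hitting time of $y$ for the chain started at $z$. Two facts are needed. First, positive recurrence yields $\PP_z(\sigma<\infty)=1$. Second, since on $\NN$ only finitely many states are visited in any bounded time window, $\tau_{(N)}\uparrow\infty$ almost surely as $N\to\infty$. Consequently the event $A_{n,N}:=\{\sigma\le n,\,\sigma<\tau_{(N)}\}$ is increasing in both $n$ and $N$, and $\PP_z(A_{n,N})\uparrow \PP_z(\sigma<\infty)=1$.

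Decomposing $\{\tau_{(N)}>n\}\cap A_{n,N}$ according to the value of $\sigma$, observing that on $\{\sigma=k,\,\sigma<\tau_{(N)}\}$ one has $X_k=y$ and $X_j<N$ for $j\le k$, and using strong Markov together with the trivial bound $\PP_y(\tau_{(N)}>n-k)\ge \PP_y(\tau_{(N)}>n)$ for $0\le k\le n$, I obtain
$$
\PP_z(\tau_{(N)}>n)\;\ge\;\sum_{k=0}^{n}\PP_z(\sigma=k,\,\sigma<\tau_{(N)})\,\PP_y(\tau_{(N)}>n-k)\;\ge\;\PP_z(A_{n,N})\,\PP_y(\tau_{(N)}>n),
$$
which rearranges to $\PP_y(\tau_{(N)}>n)/\PP_z(\tau_{(N)}>n)\le 1/\PP_z(A_{n,N})\to 1$. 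Together with Lemma \ref{lemma1}, this establishes (\ref{eqxy13}).

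The only point that demands some care is that the statement involves a \emph{simultaneous} limit $n,N\to\infty$, but this causes no trouble because $A_{n,N}$ is monotone in both variables, so $\PP_z(A_{n,N})\to 1$ along any sequence with $n,N\to\infty$ jointly. Incidentally, monotonicity of $P$ enters this proof only through Lemma \ref{lemma1} to supply the cheap lower bound; the harder upper bound is valid for every irreducible positive recurrent kernel on $\NN$.
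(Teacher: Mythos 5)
Your proof is correct, but it takes a genuinely different route from the paper's. The paper first deduces from the Pollak--Siegmund limit (\ref{eqxy8}) that $\PP_x(\tau_{(N)}>n\mid\tau_{(N)}>n-1)\to 1$, and then runs an induction over the states $x\in\NN$ (the property ``Prop$(x)$'' in (\ref{eqxy18})) based on the one-step decomposition $\PP_x(\tau_{(N)}>n)=\sum_z P(x,z)\PP_z(\tau_{(N)}>n-1)$, using Lemma \ref{lemma1} and the non-degeneracy (\ref{eqxy15}) to propagate the ratio limit from a state to its neighbours. You instead obtain the non-trivial inequality directly by applying the strong Markov property at the first hitting time $\sigma=\tau_y$ of $y$ from $z$, together with recurrence ($\PP_z(\sigma<\infty)=1$) and the a.s.\ divergence $\tau_{(N)}\uparrow\infty$; the resulting bound $\PP_z(\tau_{(N)}>n)\ge\PP_z(A_{n,N})\,\PP_y(\tau_{(N)}>n)$ is a correct strong-Markov computation, and the monotonicity of $A_{n,N}$ in both indices is exactly what is needed to control the joint limit $n,N\to\infty$. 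Your argument is shorter, does not invoke (\ref{eqxy8}), and, as you observe, reveals that the lemma is really a ratio-limit statement valid for arbitrary irreducible recurrent chains on $\NN$: symmetrizing your upper bound in $y$ and $z$ would remove the appeal to Lemma \ref{lemma1} altogether. The paper's induction, by contrast, stays entirely inside the monotonicity framework already developed but is considerably more involved.
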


\begin{proof}
We will use a recurrence on $n\in \NN$. We will also use the following
remark that follows from Lemma \ref{lemma1}: 
if $\lim\limits_{n, N\to \infty} 
\PP_y(\tau_{(N)}>n)/\PP_z(\tau_{(N)}>n)=1$ for $y<z$, then  
\begin{equation}
\label{eqxy14}
\forall \; y<x<z: \quad  
\lim\limits_{n, N\to \infty}
\frac{\PP_x(\tau_{(N)}>n)}{\PP_y(\tau_{(N)}>n)}=
\lim\limits_{n, N\to \infty}
\frac{\PP_x(\tau_{(N)}>n)}{\PP_z(\tau_{(N)}>n)}=1
\end{equation}
We claim that
\begin{equation}
\label{eqxy15}
\forall x\in \NN: \quad \sum_{z\le x} P(x,z)<1.
\end{equation}
In fact, if $\sum_{z\le x} P(x,z)=1$ holds for some $x$, then  
the monotone property implies $\sum_{z\le x} P(y,z)=1$ for all $y\le x$
and so the set of points $\{1,..,x\}$ is a closed set of $P$ 
contradicting the irreducibility property. Then, the claim holds and 
for all $x\in \NN$ 
there exists some $z>x$ such that $P(x,z)>0$.

Let $x\in \NN$. For $\epsilon>0$ there
exists a bounded $L=L(\epsilon)$ such that 
$\sum_{z\in L}\pi_z>1-(\epsilon/2)$.
From (\ref{eqxy8}) we get that there exists
$n_0, N_0$ such that for $n\ge n_0, N\ge N_0$ one has 
$$
\lim\limits_{n, N\to \infty} \PP_x(X_n\le L \, | \, \tau_{(N)}>n)
>1-\epsilon.
$$
Then,
\begin{equation}
\label{eqxy16}
\forall \, x\in \NN:\quad
\lim\limits_{n, N\to \infty} \PP_x(\tau_{(N)}>n \,| \, 
\tau_{(N)}>n-1)=1.
\end{equation}

For the induction we use the equality
$$
\PP_x(\tau_{(N)}>n)=\sum_{z\in \NN} P(x,z) \PP_z(\tau_{(N)}>n-1),
$$
so
\begin{equation}
\label{eqxy17}
1=\sum_{z\in \NN} P(x,z) 
\frac{\PP_z(\tau_{(N)}>n-1)}{\PP_x(\tau_{(N)}>n-1)} 
\frac{1}{\PP_x(\tau_{(N)}>n \, | \tau_{(N)}>n-1)}.
\end{equation}

We define:
\begin{eqnarray}
\label{eqxy18}
&{}& \hbox{ Property Prop}(x) \hbox{ at } x \hbox{ is}:\\
\nonumber
&{}& \forall y\le x:\;
\lim\limits_{n, N\to \infty} \PP_x(\tau_{(N)}>n)/\PP_y(\tau_{(N)}>n)=1
\hbox{ and } \\
\nonumber
&{}& \exists \, z>x \hbox{ such that } 
\lim\limits_{n, N\to \infty} \PP_x(\tau_{(N)}>n)/\PP_z(\tau_{(N)}>n)=1.
\end{eqnarray}
Note that (\ref{eqxy14}) implies that if Prop$(x)$ holds
then we can always assume $z=x+1$ in (\ref{eqxy18}) 
and so that
$$
\forall \, y\le x: \quad
\lim\limits_{n, N\to \infty} \PP_x(\tau_{(N)}>n)/\PP_{x+1}
(\tau_{(N)}>n)=1.
$$

Let us prove that Prop$(1)$ holds. We need only to show 
that for some $z>x$ one has
$\lim\limits_{n, N\to \infty} \PP_1(\tau_{(N)}>n)/\PP_z(\tau_{(N)}>n)=1$.
From (\ref{eqxy17}) we have
$$
1=\frac{P(1,1)}{\PP_1(\tau_{(N)}\!>\! n \, | \tau_{(N)}\!>\! n\!-\!1)}
+\sum_{z>1} P(1,z) \frac{\PP_z(\tau_{(N)}\!>\! n\!-\!1)}
{\PP_1(\tau_{(N)}\!>\! n\!-\!1)}
\frac{1}{\PP_1(\tau_{(N)}\!>\! n \, | \tau_{(N)}>n\!-\!1)}.
$$
From (\ref{eqxy16}), (\ref{eqxy11})
we deduce that $\lim\limits_{n, N\to \infty} 
\PP_z(\tau_{(N)}>n-1)/\PP_1(\tau_{(N)}>n-1)=1$ for all $z>1$
such that $P(1,z)>0$. From (\ref{eqxy15}) we get Prop$(1)$.

Let $x>1$. We assume Prop$(y)$ holds up to $y=x-1$ and let us 
show Prop$(x)$ is satisfied. We have
\begin{equation}
\label{eqxy19}
1=\sum_{z\in \NN} P(x,z)
\frac{\PP_z(\tau_{(N)}>n-1)}{\PP_x(\tau_{(N)}>n-1)}   
\frac{1}{\PP_x(\tau_{(N)}>n \, | \tau_{(N)}>n-1)}.
\end{equation}
From (\ref{eqxy19}), (\ref{eqxy16}), (\ref{eqxy11})
we get that $\lim\limits_{n, N\to \infty}
\PP_z(\tau_{(N)}>n-1)/\PP_1(\tau_{(N)}>n-1)=1$ for all $z>1$
such that $P(x,z)>0$. From (\ref{eqxy15}) we get Prop$(x)$. 
Then, the result is shown.
\end{proof}

Let us state the Pollak-Siegmund limit relation (\ref{eqxy8}) for the
reversed chain.

\begin{proposition}
\label{propx2} 
We have 
$$
\forall x,y\in {\mathbb N}:\quad \lim\limits_{n,N\to \infty }
{\mathbb P}_{x}(\overleftarrow{X}_{n}=y\,|\,
\overleftarrow{\tau}_{(N)}>n)=\pi (y).
$$
\end{proposition}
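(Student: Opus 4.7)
The plan is to start from the identity (\ref{eqxy10}) and show separately that the numerator converges to $\pi(y)\pi(x)$ and that the denominator converges to $\pi(x)$, giving the ratio $\pi(y)$ in the limit. The numerator is handled immediately by the original Pollak-Siegmund relation (\ref{eqxy8}) applied to $y$: we have $\PP_y(X_n=x\,|\,\tau_{(N)}>n)\to \pi(x)$ as $n,N\to\infty$, so the numerator tends to $\pi(y)\pi(x)$.

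The denominator
$$
S_{n,N}(x,y):=\sum_{z\in\NN}\pi(z)\,\PP_z(X_n=x\,|\,\tau_{(N)}>n)\,\frac{\PP_z(\tau_{(N)}>n)}{\PP_y(\tau_{(N)}>n)}
$$
converges term-by-term to $\pi(z)\pi(x)$ by (\ref{eqxy8}) combined with Lemma \ref{lemma2}, so if we can justify interchanging limit and infinite sum, the denominator will converge to $\pi(x)\sum_z\pi(z)=\pi(x)$, completing the proof.

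The main obstacle is this interchange, and I would resolve it by dominated convergence after splitting $S_{n,N}(x,y)=\sum_{z<y}+\sum_{z\ge y}$. The first sum has only finitely many terms, so pointwise convergence suffices and yields the limit $\pi(x)\sum_{z<y}\pi(z)$. For the tail $\sum_{z\ge y}$, the key point is the domination bound: using $\PP_z(X_n=x\,|\,\tau_{(N)}>n)\le 1$, the summand is at most $\pi(z)\,\PP_z(\tau_{(N)}>n)/\PP_y(\tau_{(N)}>n)$. By Lemma \ref{lemma1}, for $y\le z\le N$ this ratio is bounded by $1$, while for $z>N$ the factor $\PP_z(\tau_{(N)}>n)$ vanishes. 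Hence each summand of the tail is dominated by $\pi(z)$, which is summable with total mass $\sum_{z\ge y}\pi(z)<\infty$, so dominated convergence applies and the tail converges to $\pi(x)\sum_{z\ge y}\pi(z)$.

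Adding the two pieces gives $S_{n,N}(x,y)\to \pi(x)$, and dividing establishes
$\PP_x(\overleftarrow X_n=y\,|\,\overleftarrow\tau_{(N)}>n)\to \pi(y)$. The whole argument rests on Lemma \ref{lemma1} (monotonicity of $\PP_\cdot(\tau_{(N)}>n)$ supplying the uniform bound), Lemma \ref{lemma2} (the ratio limit providing pointwise convergence of the denominator factor), and the original Pollak-Siegmund relation (\ref{eqxy8}) for $P$ (pointwise convergence of the conditional probabilities).
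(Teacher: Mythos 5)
Your proof is correct and follows essentially the same route as the paper: starting from identity (\ref{eqxy10}) and passing to the limit using (\ref{eqxy8}), Lemma \ref{lemma1} and Lemma \ref{lemma2}. The paper simply invokes dominated convergence in one line, whereas you spell out the splitting of the sum at $z=y$ and the uniform domination of the tail by $\pi(z)$; this is exactly the justification the paper leaves implicit.
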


\begin{proof}
From condition (\ref{eqxy11}) and (\ref{eqxy8}), (\ref{eqxy13}), 
we can use the dominated convergence theorem
in (\ref{eqxy10}) to get
$$
\lim\limits_{n,N\to \infty} \PP_x(\vX_n=y \,| \, \vtau_{(N)}>n)
=\frac{\pi(y)\pi(x)} {\sum_{z\in \NN} \pi(z)\pi(x)}=\pi(y).
$$
\end{proof}

\subsection{The truncation of the mean expected value have nonnegative dual}
\label{Section12} 

We assume $P$ is monotone on $\NN$. Then, we can consider that a truncation of 
$P$ at level $N$ is a kernel $P_{N}$ taking values in $I_{N}$ that satisfies 
\begin{equation}
\label{eqxy20}
\hbox{ for }x<N:\;P_{N}(x,y)=P(x,y)\hbox{ if }y<N\hbox{ and } P_{N}(x,N)
=\sum_{z\ge N}P(x,z).  
\end{equation}
The unique degree of freedom is to define the redistribution of mass at $N$.
In the truncation $P_N$ we will define, we do it in such a way that the 
stationary distribution is preserved in
a very specific way. Let ${\overline{\pi }}$ be the tail of $\pi $, 
${\overline{\pi }}(x)=\sum_{z\ge x}\pi (z)$, in particular 
${\overline{\pi }}(N)=\sum_{z\ge N}\pi (z)$. We define $P_{N}$ 
by (\ref{eqxy20}) and such that 
$$
P_{N}(N,y)=\frac{1}{{\overline{\pi }}(N)}\sum_{z\ge N}\pi
(z)P(z,y),\;\;P_{N}(N,N)=\frac{1}{{\overline{\pi }}(N)}\sum_{z\ge N}\pi
(z)(\sum_{u\ge N}P(z,u)).
$$

Let $\pi_N$ be given by $\pi_N(x)=\pi(x)$ for $x<N$ and $\pi_N(N)
={\overline{\pi}}(N)$. 
Let us check that $\pi_N$ is the stationary distribution of $P_N$. 
For $y<N$ one has 
$$
\sum_{x=1}^N \pi_N(x) P_N(x,y)= \sum_{x<N} \pi(x) P_N(x,y)+ (\sum_{z\ge N}
\pi(z) P(z,y))=\pi(y)=\pi_N(y),
$$
and for $y=N$, 
\begin{eqnarray*}
\sum_{x=1}^N \pi_N(x) P_N(x,N)&=& \sum_{x<N}\pi(x) (\sum_{u\ge N} P(x,u))+
\sum_{z\ge N} \pi(z) (\sum_{u\ge N} P(z,u)) \\
&=& \sum_{x\in {\mathbb N}}\pi(x) \sum_{u\ge N} P(x,u) 
={\overline{\pi}}(N)=\pi_N(N).
\end{eqnarray*}

This truncation can be written as the action of a mean expected operator.
Let ${\mathbb E}_N$ be the mean expected operator on $L^1({\mathbb N},\pi)$
with respect to the $\sigma-$field induced by the partition 
$\alpha_N=\{\{x\}: x<N\}\cup \{x\ge N\}$. So, 
$$
{\mathbb E}_N f(x)=f(x) \hbox{ if }x<N \hbox{ and } 
{\mathbb E}_N f(x) =
\frac{1}{{\overline{\pi}}(N)}\sum_{y\ge N} \pi(y)f(y) \hbox{ if }x\ge N. 
$$
Since ${\mathbb E}_N f(x)$ is constant for $x>N$ we can identify $\alpha_N$
with $I_N$, the atom $\{x\}$ is identified with $x$ when $x<N$ and the atom 
$\{x\ge N\}$ is identified with $N$.

\begin{proposition}
\label{propx3} 
The truncation $P_{N}$ satisfies 
\begin{equation}
\label{eqxy21}
P_{N}={\mathbb E}_{N}P, 
\end{equation}
where ${\mathbb E}_{N}$ is the mean expected operator defined as above.
\end{proposition}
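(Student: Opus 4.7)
My plan is to verify (\ref{eqxy21}) by a direct entry-by-entry computation, after fixing the correct interpretation of $\mathbb{E}_N P$ as a kernel on $I_N$. First I realize $\mathbb{E}_N$ itself as a kernel on ${\mathbb N}$: $\mathbb{E}_N(x,z)=\delta_{x,z}$ for $x<N$, and $\mathbb{E}_N(x,z)=\pi(z)\mathbf{1}(z\ge N)/\overline{\pi}(N)$ for $x\ge N$. Since every function in the range of $\mathbb{E}_N$ is constant on $\{z\ge N\}$, the composition $\mathbb{E}_N P$ has $\alpha_N$-measurable image; it therefore maps $\alpha_N$-measurable functions to $\alpha_N$-measurable functions and, under the identification $\alpha_N\leftrightarrow I_N$ already set up in the paper, becomes a kernel on $I_N$ in the natural way (rows indexed by $x\ge N$ collapse to a single row labelled $N$, columns indexed by $y\ge N$ are aggregated into the column labelled $N$).

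Next I compute the entries of $\mathbb{E}_N P$ row by row. For $x<N$ only $z=x$ contributes, so $(\mathbb{E}_N P)(x,y)=P(x,y)$ for every $y\in{\mathbb N}$; under the identification this gives $P(x,y)$ for $y<N$ and $\sum_{y\ge N}P(x,y)$ in the column labelled $N$, which is exactly the recipe (\ref{eqxy20}) defining $P_N$ on the rows $x<N$. For $x\ge N$ the value
$$
(\mathbb{E}_N P)(x,y)=\frac{1}{\overline{\pi}(N)}\sum_{z\ge N}\pi(z)P(z,y)
$$
does not depend on $x$, confirming that the rows $x\ge N$ can indeed be collapsed; for $y<N$ this matches the defining formula $P_N(N,y)$, while aggregating the columns $\{y\ge N\}$ yields
$$
\frac{1}{\overline{\pi}(N)}\sum_{z\ge N}\pi(z)\sum_{u\ge N}P(z,u)=P_N(N,N).
$$

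The computation is routine substitution; the only point that needs care is keeping the two-sided identification straight, namely that the restriction of $\mathbb{E}_N P$ to $\alpha_N$-measurable functions becomes a kernel on $I_N$ by collapsing the (already equal) rows $x\ge N$ and summing the columns $y\ge N$ into one. Once this identification is fixed, the equality $P_N=\mathbb{E}_N P$ follows immediately from the definitions of $P_N$ and $\mathbb{E}_N$, with no further estimate needed.
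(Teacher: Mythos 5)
Your proof is correct and follows essentially the same route as the paper: a direct verification that the entries of $\mathbb{E}_N P$, read through the identification of $\alpha_N$ with $I_N$, reproduce the defining formulas for $P_N$. The only cosmetic difference is that you obtain the column labelled $N$ by explicitly aggregating the columns $y\ge N$, whereas the paper checks the columns $y<N$ via the test functions $\mathbf{1}_{\{y\}}$ and then deduces the last column from stochasticity of both kernels.
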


\begin{proof}
Let $P$ be a stochastic kernel.
Since $\EE_N$ is stochastic then $\EE_N P$ is also stochastic.
It satisfies
\begin{eqnarray*}
&{}&
\EE_N Pf(x)=Pf(x) \hbox{ if }x<N \hbox{ and } \\
&{}&
\EE_N Pf(x)=\frac{1}{\bpi(N)}\sum_{z\ge N} \pi(z)(Pf)(z)
=\frac{1}{\bpi(N)}\sum_{z\ge N} \pi(z) \sum_{y\in \NN} P(z,y)f(y) 
\hbox{ if } x\ge N.
\end{eqnarray*}
For $y<N$ one has
\begin{eqnarray*}
&{}& \EE_N (P{\bf 1}_{\{y\}})(x)=P{\bf 1}_{\{y\}}(x)=P(x,y) 
\hbox{ if } x<N
\hbox{ and }\\
&{}&\EE_N (P{\bf 1}_{\{y\}})(x)=
\frac{1}{\bpi(N)}\sum_{z\ge N} \pi(y) P{\bf 1}_{\{y\}}(z)
=\frac{1}{\bpi(N)}\sum_{z\ge N} \pi(y)P(z,y) \hbox{ if }
x\ge N.
\end{eqnarray*}
Since $P{\bf 1}_{\{y\}}(x)=P(x,y)$ we have proven 
\begin{equation}
\label{eqxy22}
P_N(x,y)=\EE_N P(x,y) \hbox{ for } y<N, x\in \NN.
\end{equation}  
Since $\EE_N P {\bf 1}={\bf 1}=\sum_{y=1}^N {\bf 1}_{\{y\}}$, and
since $P_N$ and $E_N P$ are stochastic operators, from
(\ref{eqxy22}) we conclude $P_N(x,N)=(\EE_N P)(x,N)$. 
Hence, (\ref{eqxy21}) follows.
\end{proof}

Let us now prove that these truncations have a nonnegative dual.

\begin{proposition}
\label{propx4} 
let $P$ be a monotone kernel on $\NN$. Then, $P_{N}$ is a monotone kernel 
having a nonnegative Siegmund 
dual $\widehat{P}_{N}$ with values in $I_{N}$ and such that $N$ is an 
absorbing state.
\end{proposition}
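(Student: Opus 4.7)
The statement has three ingredients---monotonicity of $P_N$, nonnegativity of the Siegmund dual $\widehat{P}_N$, and the absorbing character of $N$ for $\widehat{P}_N$---and my plan is to reduce everything to the first. Once $P_N$ is known to be monotone and stochastic on $I_N$, formula (\ref{eqxy4}) applied to $P_N$ yields a well-defined nonnegative $\widehat{P}_N$, and the finite-case computation recalled just before Corollary \ref{coro1} gives $\widehat{P}_N(N,\cdot)=\delta_{N,\cdot}$, making $N$ absorbing. The task therefore boils down to showing that $F_N(x,y):=\sum_{z\le y}P_N(x,z)$ is nonincreasing in $x\in I_N$ for each fixed $y\in I_N$.

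The case $y=N$ is trivial since $P_N$ is stochastic, so $F_N(x,N)\equiv 1$. For $y<N$ and consecutive indices $x,x+1$ both strictly below $N$, definition (\ref{eqxy20}) gives $F_N(x,y)=\sum_{z\le y}P(x,z)$, and the monotonicity of $P$ itself delivers $F_N(x,y)\ge F_N(x+1,y)$. The only delicate step is the jump from $x=N-1$ to $x=N$. Using the explicit expression for the new $N$-th row,
$$F_N(N,y)=\frac{1}{\bpi(N)}\sum_{u\ge N}\pi(u)\sum_{z\le y}P(u,z).$$
For every $u\ge N>N-1$, monotonicity of $P$ yields $\sum_{z\le y}P(u,z)\le \sum_{z\le y}P(N-1,z)=F_N(N-1,y)$, and averaging against the probability weights $\pi(u)/\bpi(N)$ preserves this inequality, so $F_N(N,y)\le F_N(N-1,y)$ as required.

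The main obstacle is exactly this boundary comparison, since it is the only place where the ad hoc $\pi$-weighted redistribution of mass at level $N$ has to interact with the monotone structure of $P$. What makes the argument go through is the observation that $P_N(N,\cdot)$ is a convex combination of the rows $P(u,\cdot)$ for $u\ge N$, and the monotonicity inequality for $P$ is transferred cleanly under this averaging. Once that single step is in place, the remaining claims are immediate consequences of the general Siegmund theory recalled in Section \ref{SectionMS}.
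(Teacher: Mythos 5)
Your proof is correct and follows essentially the same route as the paper: the heart of the matter in both is that $P_N(N,\cdot)$ is a $\pi$-weighted convex combination of the rows $P(u,\cdot)$, $u\ge N$, so the monotonicity inequality $\sum_{z\le y}P(u,z)\le\sum_{z\le y}P(v,z)$ for $v<N\le u$ survives the averaging, after which nonnegativity of $\widehat{P}_N$ and the absorbing character of $N$ follow from the general finite-case Siegmund computation. The only difference is cosmetic: the paper writes out the entries of $\widehat{P}_N$ explicitly (formulas it reuses later in Proposition \ref{propx5}), whereas you invoke the general theory, which suffices for this statement.
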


\begin{proof}
We claim that the monotone property on $P$ implies the monotonicity of $P_{N}$. 
Firstly, for all $v,w\in I_N$ we have 
$\sum\limits_{z\le N} P_N(v,z)=1= \sum\limits_{z\le N} P_N(w,z)$.
Now, let $x<N$. For $v\le w<N$ one has 
$$
\sum\limits_{z\le x} P_N(v,z)
= \sum\limits_{z\le x} P(v,z)\le \sum\limits_{z\le x} P(w,z)=\sum\limits_{z\le x} P_N(w,z),
$$ 
and for $v<N$ it holds
\begin{eqnarray*}
\quad\quad \sum\limits_{z\le x} P_N(N,z)&=&
\frac{1}{\bpi(N)}(\sum\limits_{z\le x} \sum_{u\ge N} \pi(u) P(u,z))
=\frac{1}{\bpi(N)} (\sum_{u\ge N}\pi(u)\sum\limits_{z\le x} P(u,z))\\   
&\le &\frac{1}{\bpi(N)} (\sum_{u\ge N}\pi(u)\sum\limits_{z\le x}
P(v,z))=\sum\limits_{z\le x}P(v,z),
\end{eqnarray*}
where the monotonicity of $P$ was used to state the last $\le$ relation.
Then the claim holds, that is $P_{N}$ is monotone.

\medskip

Hence $P_{N}$ has a nonnegative Siegmund
dual $\widehat{P}_{N}$ with values in $I_{N}$ and that following 
(\ref{eqxy4}) it satisfies,
$\hP_N(x,y)=\sum\limits_{z\le x}(P_N(y,z)-P_N(y+1,z))$ for 
$x,y\in I_N$. This gives:
\begin{equation}
\label{eqxy23}
\hP_N(x,y)=\sum\limits_{z\le x}(P(y,z)-P(y+1,z))=\hP(x,y) 
\hbox{ for } x<N, y<N-1,
\end{equation}
and for $x=N$ we get
$$
\hP_N(N,y)=0 \hbox{ for } y\le N-1.
$$
Note that,
\begin{equation}
\label{eqxy24}
\hP_N(x,N-1)=\sum\limits_{z\le x}P(N-1,z)-\sum\limits_{z\le x}P_N(N,z)\ge 0
\hbox{ for } x<N.
\end{equation}
%(This term is nonnegative because $P_N$ is monotone and $P(N-1,z)=P_N(N-1,z)$
%for all $z<N$). 
Finally
\begin{equation}
\label{eqxy25}
\hP_N(x,N)=\sum\limits_{z\le x} P_N(N,z)=\frac{1}{\bpi(N)}
\sum\limits_{z\le x} 
(\sum_{u\ge N} \pi(u) P(u,z)) \hbox{ for } x<N;
\end{equation}
and
$$
\hP_N(N,N)=\sum\limits_{z\le N} P_N(N,z)=1.
$$
Hence, $\hP_N$ is a Siegmund dual of $P_N$ with values 
in $I_N$ and $N$ is an absorbing state for $\hP_N$.
\end{proof}

Notice that for $x<N$ one has that the difference between the kernels 
$\widehat P_N(x,y)$ and $\widehat P(x,y)$ only happens at $y=N-1$ and $y=N$.
We have 
\begin{eqnarray*}
&{}&\widehat P_N(x,N-1)+\widehat P_N(x,N)=\sum_{z\le x} P(N-1,z) \hbox{ and }
\\
&{}&\widehat P(x,N-1)+\widehat P(x,N)=\sum_{z\le x} P(N-1,z)+\sum_{z\le x}
P(N+1,z),
\end{eqnarray*}
and so 
$$
(\widehat P(x,N-1)+\widehat P(x,N))-(\widehat P_N(x,N-1)+\widehat P_N(x,N))=
\sum_{z\le x} P(N+1,z)\ge 0.
$$
Therefore, if $\widehat P$ loses mass through $x<N$ then $\widehat P_N$ also
does.

On the other hand it is straightforward to check that the truncation of the
reversed kernel $\overleftarrow{P}_N$ is the reversed of $P_N$ with respect
to $\pi_N$, that is it satisfies 
$$
\overleftarrow{P}_N(x,y)=\pi_N(x)^{-1}P_N(y,x)\pi_N(y), \; x,y\le N.
$$

From (\ref{eqxy5}) we can define the intertwining matrix 
$\Lambda (x,y)=\mathbf{1}(x\ge y)(\pi _{N}(y)/\pi _{N}^{c}(x))$ 
for $x,y\in I_{N}$ where 
$\pi_{N}(y)=\pi (y)$ for $y<N$ and $\pi _{N}(N)={\overline{\pi }}(N)$. The
intertwined matrix $\widetilde{P}_{N}$ of $\overleftarrow{P}_{N}$ which
satisfies $\widetilde{P}_{N}\Lambda _{N}=\Lambda _{N}\overleftarrow{P}_{N}$,
is given by (\ref{eqxy6}). It is 
$\widetilde{P}_{N}(x,y)=\widehat{P}_{N}(x,y)
(\pi_{N}^{c}(y)/\pi _{N}^{c}(x))$ for $x,y\in I_{N}$. Note that 
$\pi_{N}^{c}(y)=\pi ^{c}(y)$ for $y<N$ and $\pi _{N}^{c}(N)=1$.

\begin{proposition}
\label{propx5} 
Let us consider two truncations as above, $P_{N}$ and $P_{N+1}
$ at levels $N$ and $N+1$, respectively. Then, the time 
$\widetilde{\tau}_{N}^{N}$ of hitting $N$ by $\widetilde{P}_{N}$ 
is stochastically smaller
than the time $\widetilde{\tau }_{N+1}^{N+1}$ of hitting $N+1$ by 
$\widetilde{P}_{N+1}$.
\end{proposition}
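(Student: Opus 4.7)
The plan is to realize the two intertwined chains $\widetilde{X}^N$ and $\widetilde{X}^{N+1}$ on a common probability space, started from the same state $x \in I_N$, so that the pathwise inequality $\widetilde{\tau}^N_N \le \widetilde{\tau}^{N+1}_{N+1}$ holds almost surely; the stochastic order then follows automatically.

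The natural starting point is the Doob $h$-transform description given by Theorem \ref{theo1}$(v)$, equation (\ref{eqxy3}): the law of $\widetilde{X}^N$ is that of $\widehat{X}^N$ conditioned on $\{\widehat{\tau}^N_N < \widehat{\mathcal{T}}\}$, and analogously for $\widetilde{X}^{N+1}$. It therefore suffices to couple the Siegmund dual chains $\widehat{X}^N$ and $\widehat{X}^{N+1}$, both started at $x$, so that $\widehat{\tau}^N_N \le \widehat{\tau}^{N+1}_{N+1}$ on their joint survival event. The explicit formulas (\ref{eqxy23})--(\ref{eqxy25}) together with the identity noted just after (\ref{eqxy25}) show that for $x < N$ the kernels $\widehat{P}_N$ and $\widehat{P}_{N+1}$ agree on every transition $\widehat{P}_N(x,y) = \widehat{P}_{N+1}(x,y) = \widehat{P}(x,y)$ as soon as $y < N-1$, and differ only on the boundary block of transitions into $\{N-1, N, N+1\}$; the identity
$$(\widehat{P}(x,N-1)+\widehat{P}(x,N))-(\widehat{P}_N(x,N-1)+\widehat{P}_N(x,N)) = \sum_{z \le x} P(N+1, z)$$
says that $\widehat{P}_{N+1}$ reroutes into its new top state $N+1$ exactly the mass that $\widehat{P}_N$ loses to death. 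Using a common uniform variable at each step, I would make the two chains move in lockstep on all bulk transitions and split the boundary block so that every step in which $\widehat{X}^N$ is absorbed at $N$ is matched with a step of $\widehat{X}^{N+1}$ that either lands in $N$ (still requiring one more jump before absorption) or is held back, producing the claimed hitting-time inequality.

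The main obstacle is that $\widehat{P}_N(x, \cdot)$ and $\widehat{P}_{N+1}(x, \cdot)$ are not stochastically ordered in a single direction: their cumulative sums $\sum_{z\le y} \widehat{P}_N(x,z)$ and $\sum_{z\le y} \widehat{P}_{N+1}(x,z)$ cross between $y = N-1$ and $y = N$, so a naive monotone coupling fails, and the construction must carry extra bookkeeping to match the ``absorption into $N$'' event of $\widehat{X}^N$ with a ``not-yet-absorbed'' state of $\widehat{X}^{N+1}$ while preserving both marginal laws. A secondary, more routine, point is to transfer the resulting pathwise inequality through the simultaneous conditioning on the two absorption events so as to recover the corresponding statement for the intertwined chains $\widetilde{X}^N$ and $\widetilde{X}^{N+1}$.
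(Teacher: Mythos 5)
Your overall strategy --- couple the Siegmund duals $\widehat X^N$ and $\widehat X^{N+1}$ pathwise and then transfer to the intertwined chains --- is exactly the paper's, but the proposal stops where the actual work begins, and the one quantitative identity you do invoke is the wrong one. The identity quoted after (\ref{eqxy25}) compares $\widehat P_N$ with the \emph{untruncated} dual $\widehat P$, not with $\widehat P_{N+1}$, and $\sum_{z\le x}P(N+1,z)$ is not ``the mass $\widehat P_N$ loses to death'' (the two truncated duals lose the same mass, through the low states, at each step). What the coupling actually needs, and what the paper derives from the monotonicity of $P$ together with (\ref{eqxy23})--(\ref{eqxy25}), is the exact balance
\[
\widehat P_N(x,N-1)+\widehat P_N(x,N)=\widehat P_{N+1}(x,N-1)+\widehat P_{N+1}(x,N)+\widehat P_{N+1}(x,N+1),
\]
so that the two rows have equal total mass and the killing events can be synchronized, together with the two inequalities $\widehat P_{N+1}(x,N+1)\le\widehat P_N(x,N)$ and $\widehat P_N(x,N-1)-\widehat P_{N+1}(x,N-1)\le\widehat P_{N+1}(x,N)$. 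These are what allow every absorbing step of $\widehat X^{N+1}$ to be matched with an absorbing step of $\widehat X^N$, while the excess absorbing mass of $\widehat P_N$ is matched with not-yet-absorbed states of $\widehat P_{N+1}$. The row $x=N-1$ must be handled separately, where one has the exact identity $\widehat P_N(N-1,N)=\widehat P_{N+1}(N-1,N)+\widehat P_{N+1}(N-1,N+1)$. Without these statements the ``extra bookkeeping'' you flag is not an obstacle you have dealt with but the entire content of the proof.

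The transfer step also has a gap. A pathwise inequality ``on the joint survival event'' does not by itself pass through the two different conditionings $\{\widehat\tau^N_N<\widehat{\mathcal T}\}$ and $\{\widehat\tau^{N+1}_{N+1}<\widehat{\mathcal T}\}$ in (\ref{eqxy3}): one also needs the two survival events to coincide in the coupling (which the mass balance above provides). The paper bypasses conditioning altogether by noting that $\pi^c_N=\pi^c_{N+1}=\pi^c$ on $\{1,\dots,N-1\}$ and $\pi^c_N(N)=1=\pi^c_{N+1}(N+1)$, so that $\PP_x(\widetilde\tau^N_N=j)=\pi^c(x)^{-1}\PP_x(\widehat\tau^N_N=j)$ and likewise at level $N+1$ with the \emph{same} constant $\pi^c(x)^{-1}$; the stochastic order then transfers verbatim from the duals to the intertwined chains. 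You should either reproduce that computation or prove that your coupling kills the two dual chains simultaneously.
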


\begin{proof}
We will note by  $\hX^{N}=(\hX^{N}_{n})$ and $\hX^{N+1}=(\hX^{N+1}_{n})$ 
the Markov chains associated to kernels $\hP_N$ and $\hP_{N+1}$, 
respectively.  

From (\ref{eqxy24}), for every $x<N-1$ one has
$$
\hP_N(x,N-1)-\hP_{N+1}(x,N-1)=\sum\limits_{z\le x}P(N,z)
-\frac{1}{\bpi(N)}(\sum_{u\ge N} \pi(u) \sum\limits_{z\le x}P(u,z)).
$$
Similarly,
$$
\hP_{N+1}(x,N)=\sum\limits_{z\le x}P(N,z)-
\frac{1}{\bpi(N+1)} (\sum_{u\ge N+1} \pi(u) \sum\limits_{z\le x} P(u,z)).
$$
Then, by monotonicity
$$
\hP_N(x,N-1)-\hP_{N+1}(x,N-1)\le \hP_{N+1}(x,N).
$$
So,
\begin{eqnarray*}
&{}&
(\hP_{N+1}(x,N-1)+\hP_{N+1}(x,N-1))-\hP_N(x,N)\\
&{}&=\frac{1}{\bpi(N)}(\sum_{u\ge N} \pi(u) \sum\limits_{z\le x}P(u,z))-
\frac{1}{\bpi(N+1)} 
(\sum_{u\ge N+1} \pi(u) \sum\limits_{z\le x} P(u,z)).
\end{eqnarray*}
from (\ref{eqxy25}) we also have
\begin{eqnarray*}
&{}& 0\le \hP_N(x,N)-\hP_{N+1}(x,N+1)\\
&{}&=\frac{1}{\bpi(N)}
(\sum_{u\ge N} \pi(u) \sum\limits_{z\le x}P(u,z))-
\frac{1}{\bpi(N+1)}
(\sum_{u\ge N+1} \pi(u) \sum\limits_{z\le x} P(u,z)),
\end{eqnarray*}
where the nonnegativity follows from monotonicity of $P$.
Then,
$$
\hP_{N+1}(x,N-1)+\hP_{N+1}(x,N)+\hP_{N+1}(x,N+1)
=\hP_N(x,N-1)+\hP_N(x,N).
$$
From the above equalities and inequalities and by using (\ref{eqxy23}) at every
step when we are in some state $y<N-1$, we can make a coupling
between both chains $\hX_N$ and $\hX_{N+1}$ such that when
both chains start from $x<N-1$ we have
\begin{eqnarray*}
&{}& \hX_{N+1}\in \{N,N+1\} \Leftrightarrow \hX_N=N \hbox{ and }
\\
&{}& \forall y\le N-1,\; \hX_{N+1}=y \Leftrightarrow \hX_N=y.
\end{eqnarray*}

On the other hand from (\ref{eqxy23}) we get
$$
\hP_N(N-1,y)=\sum\limits_{z\le x}(P(y,z)-P(y+1,z))=\hP(x,y)=\hP_{N+1}(N-1,y),
$$ 
so the distribution to $y\le N-1$ is the same for the two kernels.
Moreover
$$
\hP_N(N-1,N)=\sum\limits_{z\le N-1} P(N,z),
$$
and
\begin{eqnarray*}
&{}&
\hP_{N+1}(N-1,N)+\hP_{N+1}(N-1,N+1)\\
&{}&=
\sum\limits_{z\le N-1} P(N,z)-\sum\limits_{z\le N-1}P_{N+1}(N+1,z)+
\sum\limits_{z\le N-1} P_{N+1}(N+1,z).
\end{eqnarray*}
Therefore
$$
\hP_N(N-1,N)=\hP_{N+1}(N-1,N)+\hP_{N+1}(N-1,N+1).
$$
Hence, once both chains start from $N-1$, they can be coupled
to return to some state $y\le N-1$, or, if not, the rest of the mass
of kernel $\hP_N$ moves to the absorbing state $N$, and for $\hP_{N+1}$
part of this mass moves to the absorbing state $N+1$ while the rest goes
to $N$.

We have shown that the absorption time $\htau^{N}_N$
of $\hP_N$ at level $N$, is smaller
than the absorption time $\htau^{N+1}_{N+1}$ of $\hP_{N+1}$ at 
level $N+1$, that is
$$
\forall x\le N-1, k>0:\;\,
\PP_x(\htau^{N}_N\le k)\ge \PP_x(\htau^{N+1}_{N+1}\le k).
$$
Now for $x\le N-1$ one has, $\pi_N^{c}(x)=\pi^{c}(x)=\pi_{N+1}^{c}(x)$
and $\pi_N^{c}(N)=1=\pi_{N+1}^{c}(N+1)$, so
\begin{eqnarray*}
&{}&
\PP_x(\wX^{N}_1=x_1,..,\wX^{N}_k=N)
=\frac{1}{\pi^{c}(x)}\PP_x(\hX^{N}_1=x_1,..,\hX^{N}_k=N),\\
&{}& \PP_x(\wX^{N+1}_1=x_1,..,\wX^{N+1}_k=N)
=\frac{1}{\pi^{c}(x)}\PP_x(\hX^{N+1}_1=x_1,...,\hX^{N+1}_k=N).
\end{eqnarray*}
Then,
$$
\PP_x(\wtau^{N}_N=j)=\frac{1}{\pi^{c}(x)}\PP_x(\htau^{N}_N=j)
\hbox{ and }  
\PP_x(\wtau^{N+1}_{N+1}=j)=
\frac{1}{\pi^{c}(x)}\PP_x(\htau^{N+1}_{N+1}=j).
$$
Hence, we conclude
$$
\forall x\le N-1, k>0:\;\,
\PP_x(\wtau^{N}_N\le k)\ge \PP_x(\wtau^{N+1}_{N+1}\le k),
$$
and so $\wtau^{N}_N$ is stochastically smaller than $\wtau^{N+1}_{N+1}$.
\end{proof}

\section{Strong Stationary Times}
\label{SectionSST} 

Let $\pi_{0}^{\prime }$ be the initial distribution of 
$X_{0}$, so $\pi _{n}^{\prime }=\pi _{0}^{\prime }P^{n}$ is the distribution
of $X_{n}$. A stopping time is noted $T_{\rho }$ when $X_{T_{\rho }}\sim
\rho $. We recall $\tau _{a}=\inf \{n\ge 0:X_{n}=a\}$ for $a\in I$. When one
wants to emphasize the initial distribution $\pi _{0}^{\prime }$ of $X_{0}$,
these times are written by $T_{\rho }^{\pi _{0}}$ and 
$\tau _{a}^{\pi _{0}}$, respectively.

A stopping time $T$ is called a strong stationary time if $X_T\sim
\pi^{\prime}$ and it is independent of $T$, see \cite{AD}. The separation
discrepancy is defined by 
$$
\text{sep}\left({\pi}_n,{\pi }\right) := \sup\limits_{y\in I} 
\left[ 1-\frac{\pi_n(y)}{\pi(y)}\right] . 
$$
In Proposition $2.10$ in \cite{AD} it was proven that every strong
stationary time $T$ satisfies 
\begin{equation}  
\label{eqxy26}
\forall \, n\ge 0\,:\;\;\; \text{sep}\left(\pi_n,\pi\right) \leq 
{\mathbb P}_{\pi_0}\left(T>n\right) \,.
\end{equation}
In Proposition $3.2$ in \cite{AD} it was shown that there exists a strong
stationary time $T$, called sharp, that satisfies equality in 
(\ref{eqxy26}), 
$$
\forall \, n\ge 0\,:\;\;\; \text{sep}\left({\pi}_{n},{\pi }\right) = 
{\mathbb P}_{\pi_{0}}\left(T>n\right) \,. 
$$

Assume we are in the framework of Theorem \ref{theo1}, so 
$\widetilde{P}\Lambda =\Lambda \overleftarrow{P}$. 
A random time for $\widetilde{X}$ is
noted by $\widetilde{T}$ and we use similar notations as those introduced
for random times $T$ for $X$. The initial distributions of 
${\overleftarrow{X}}_{0}$ and $\widetilde{X}_{0}$ are respectively noted 
by $\overleftarrow{\pi }_{0}^{\prime}$ and 
${\widetilde{\pi }_{0}^{\prime}}$. We assume they
are linked, this means: 
\begin{equation}
\label{eqxy27}
\overleftarrow{\pi }_{0}^{\prime }={\widetilde{\pi }}_{0}^{\prime }\Lambda .
\end{equation}
In this case the intertwining relation $\widetilde{P}^{n}\Lambda 
=\Lambda {\overleftarrow{P}}^{n}$ implies 
$\overleftarrow{\pi }_{n}^{\prime }={\widetilde{\pi }}_{n}^{\prime }\Lambda$ 
for $n\ge 0$, where $\overleftarrow{\pi}_{n}$ and 
${\widetilde{\pi }}_{n}$ are the distributions of 
$\overleftarrow{X}_{n}$ and $\widetilde{X}_{n}$ respectively.

Since $\Lambda $ is stochastic it has a left probability eigenvector 
$\pi_{\Lambda }^{\prime }$, so $\pi _{\Lambda }^{\prime }=
\pi_{\Lambda}^{\prime }\Lambda $ and $\pi_{\Lambda }$ is linked with 
itself. If $\Lambda $ is non irreducible then ${\pi}_{\Lambda }$ could 
fail to be strictly positive, which is the case for the Siegmund kernel 
where $\Lambda$ is given by (\ref{eqxy5}) and one can check that 
$\mathbf{e}_{1}$ is the
unique left eigenvector satisfying $\mathbf{e}_{1}^{\prime }
=\mathbf{e}_{1}^{\prime}\Lambda $. So, the initial conditions 
$\widetilde{X}_{1}\sim \delta_{1}$ and $\overleftarrow{X}_{0}\sim \delta_{1}$ 
are linked. Assume $P$ is monotone. From relation (\ref{eqxy5}) one gets 
that (\ref{eqxy27}) is
equivalent to $\overleftarrow{\pi }_{0}(x)/\pi (x)=
\sum_{y\ge x}{\widetilde{\pi}}(y)/\pi ^{c}(y)$ for all $x\in I$. 
(See relation (4.7) and (4.10) in 
\cite{DF}). In the finite case $I=I_{N}$, Corollary \ref{coro1} $(iii)$
states that if $P$ is monotone then ${\widetilde{\partial }}=N$ is the
unique absorbing state for ${\widetilde{X}}$. Let us now introduce the sharp
dual.

\begin{definition}
\label{def3} 
The process $\widetilde{X}$ is a sharp dual to 
${\overleftarrow{X}}$ if it has an absorbing state ${\widetilde{\partial }}$, 
and when ${\overleftarrow{X}}$ and $\widetilde{X}$ start from 
linked initial conditions 
$\widetilde{X}_{0}\sim \widetilde{\pi }_{0}$, 
${\overleftarrow{X}}_{0}\sim \overleftarrow{\pi}_{0}$ 
with $\overleftarrow{\pi }_{0}^{\prime }=\widetilde{\pi }_{0}^{\prime }\Lambda$, 
then it holds 
$$
\text{sep}(\overleftarrow{\pi }_{n},\pi )
={\mathbb P}_{{\widetilde{\pi }}_{0}}(\widetilde{\tau }_{\widetilde{\partial }}>n),
\;\forall \,n\ge 0.
$$
$\Box $
\end{definition}

We recall that if $\widetilde{\partial}$ is an absorbing state for 
$\widetilde{P}$, then $\pi^{\prime}=
\mathbf{e}^{\prime}_{\widetilde{\partial}} 
\Lambda$ (this is (\ref{eqxy2})).

We now state the sharpness result alluded to in Remark $2.39$ of \cite{DF}
and in Theorem 2.1 in \cite{F}. The hypotheses stated in Remark $2.39$ are
understood as the condition (\ref{eqxy28}) below. The results of this
section were proven in \cite{HM1}.

We recall the definition made in Section $4$ in \cite{mb}: A state $d\in I$
is called separable for $X$ when 
$$
\text{sep}(\pi_{n},\pi )=1-\frac{\pi_{n}(d)}{\pi(d)}, \, n\ge 1.
$$
On the other hand, $d\in I$ was called a witness state if it satisfies 
\begin{equation}  
\label{eqxy28}
\Lambda \mathbf{e}_{d}=\pi (d)\mathbf{e}_{\widetilde{\partial }}.
\end{equation}
We note that in the monotone finite case in $I_N$, the state $d=N$ is a
witness state, this is exactly (\ref{eqxy7}) in Corollary \ref{coro1}.

\begin{proposition}
\label{propx6} 
Let $X$ be an irreducible positive recurrent Markov chain, 
$\widetilde{X}$ be a $\Lambda -$intertwining of ${\overleftarrow{X}}$ having 
$\widetilde{\partial }$ as an absorbing state. If $d\in I$ is a witness state
then $d$ is a separable state, $\widetilde{X}$ is a sharp dual to 
${\overleftarrow{X}}$ and it is satisfied, 
$$
\text{sep}(\overleftarrow{\pi }_{n},\pi )=
1-\frac{\overleftarrow{\pi }_{n}(d)}{\pi(d)}
={\mathbb P}_{{\widetilde{\pi }}_{0}}
(\widetilde{\tau }_{\widetilde{\partial }}>n),\;n\ge 1.
$$
$\Box $
\end{proposition}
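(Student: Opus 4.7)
The plan is to unwind the two equalities in the conclusion from the witness identity $\Lambda \mathbf{e}_d = \pi(d)\mathbf{e}_{\widetilde{\partial}}$ together with the relation $\pi' = \mathbf{e}_{\widetilde{\partial}}'\Lambda$ that follows from $\widetilde{\partial}$ being absorbing (Remark \ref{remk1}, formula (\ref{eqxy2})).

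First I would set up the key identity $\overleftarrow{\pi}_n' = \widetilde{\pi}_n'\Lambda$ for every $n \ge 0$. This follows by iterating the intertwining relation $\widetilde{P}\Lambda = \Lambda\overleftarrow{P}$, which gives $\widetilde{P}^n\Lambda = \Lambda\overleftarrow{P}^n$, and then combining with the linked initial condition $\overleftarrow{\pi}_0' = \widetilde{\pi}_0'\Lambda$.

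Next I would evaluate this identity in the $d$-th coordinate using the witness property. Since $\Lambda \mathbf{e}_d = \pi(d)\mathbf{e}_{\widetilde{\partial}}$, one has $\Lambda(y,d) = \pi(d)\mathbf{1}(y=\widetilde{\partial})$, so
$$
\overleftarrow{\pi}_n(d) = \sum_y \widetilde{\pi}_n(y)\Lambda(y,d) = \pi(d)\widetilde{\pi}_n(\widetilde{\partial}).
$$
Because $\widetilde{\partial}$ is absorbing, $\{\widetilde{X}_n = \widetilde{\partial}\} = \{\widetilde{\tau}_{\widetilde{\partial}} \le n\}$, and hence
$$
1 - \frac{\overleftarrow{\pi}_n(d)}{\pi(d)} = 1 - \widetilde{\pi}_n(\widetilde{\partial}) = \mathbb{P}_{\widetilde{\pi}_0}(\widetilde{\tau}_{\widetilde{\partial}} > n).
$$
This already gives the second equality of the statement once I prove that $d$ is separable.

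To prove separability, I would obtain a uniform lower bound on $\overleftarrow{\pi}_n(y)/\pi(y)$. Using $\pi(y) = \Lambda(\widetilde{\partial},y)$ (from $\pi' = \mathbf{e}_{\widetilde{\partial}}'\Lambda$) and keeping only the $\widetilde{\partial}$ term in the nonnegative sum $\overleftarrow{\pi}_n(y) = \sum_z \widetilde{\pi}_n(z)\Lambda(z,y)$, I get
$$
\overleftarrow{\pi}_n(y) \ge \widetilde{\pi}_n(\widetilde{\partial})\,\Lambda(\widetilde{\partial},y) = \widetilde{\pi}_n(\widetilde{\partial})\,\pi(y),
$$
so $1 - \overleftarrow{\pi}_n(y)/\pi(y) \le 1 - \widetilde{\pi}_n(\widetilde{\partial})$ for every $y \in I$. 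Taking the supremum and comparing with the equality at $y=d$ established above yields
$$
\mathrm{sep}(\overleftarrow{\pi}_n,\pi) = 1 - \widetilde{\pi}_n(\widetilde{\partial}) = 1 - \frac{\overleftarrow{\pi}_n(d)}{\pi(d)},
$$
which says $d$ is separable and $\widetilde{X}$ is a sharp dual of $\overleftarrow{X}$.

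There is no real obstacle here: the argument is purely algebraic once the witness identity and the absorbing-state relation (\ref{eqxy2}) are invoked. The only mild care needed is to confirm that the bound $\overleftarrow{\pi}_n(y) \ge \widetilde{\pi}_n(\widetilde{\partial})\pi(y)$ can be used termwise (all summands are nonnegative since $\Lambda$ is stochastic), which makes Step 3 the crux of the argument even though it is elementary.
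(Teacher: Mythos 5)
Your argument is correct and complete: the chain of identities $\overleftarrow{\pi}_n'=\widetilde{\pi}_n'\Lambda$, the witness relation $\Lambda(y,d)=\pi(d)\mathbf{1}(y=\widetilde{\partial})$, and the lower bound $\overleftarrow{\pi}_n(y)\ge\widetilde{\pi}_n(\widetilde{\partial})\Lambda(\widetilde{\partial},y)=\widetilde{\pi}_n(\widetilde{\partial})\pi(y)$ (valid termwise since $\Lambda\ge 0$ and $\pi>0$) give exactly the separability of $d$ and the sharpness equality. The paper itself defers the proof to \cite{HM1}, and your argument is the same standard Diaconis--Fill computation used there, so there is nothing to add.
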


\begin{remark}
\label{remk4} 
Since we have shown that being witness implies being
separable, Corollary $4.1$ of \cite{mb} stated for separable states applies
for a witness state, this is ${\overleftarrow{\tau}}_{d}^{\pi_0}=
{\overleftarrow{T}}_{\pi}^{\pi_{0}}+Z$ is an independent 
sum and $Z\sim {\overleftarrow{\tau}}_{d}^{\pi}$. $\Box $
\end{remark}

\begin{remark}
\label{rmk5} 
The condition of being witness can be stated in terms of the
dual function $H$. In fact, in \cite{HM} it was shown that if there exists 
$\widehat{a}\in \widehat{I}$ and $d\in I$ such that for some constants 
$c_{1}>0$, $c>0$ one has 
$$
H\mathbf{e}_{\widehat{a}}=c_{1}\mathbf{1}\,\text{ and }
\mathbf{e}_{d}^{\prime}H=c\mathbf{e}_{\widehat{a}}^{\prime }.
$$
Then $d$ is a witness state and $\widetilde{X}$ is a sharp dual to 
$\overleftarrow{X}$. $\Box$
\end{remark}

\begin{corollary}
\label{coro2} 
For a monotone irreducible stochastic kernel $P$, the 
$\Lambda-$intertwining Markov chain $\widetilde{X}$ 
has $N$ as an absorbing state
and it is a sharp dual of $\overleftarrow{X}$. Also, $N$ is a separable
state and the initial conditions $\overleftarrow{X}_{0}=\delta _{1}$ and 
$\widetilde{X}=\delta_{1}$ are linked. $\Box $
\end{corollary}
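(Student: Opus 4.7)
The plan is to recognize this corollary as essentially an aggregation of earlier results, so the proof is mostly a matter of checking that the hypotheses of Proposition \ref{propx6} are met in the monotone finite case, with $\widetilde{\partial}=N$ and $d=N$. I would proceed in three short steps.

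First, I would invoke Corollary \ref{coro1}$(iii)$, which states that in the monotone finite case $N$ is the unique absorbing state for both $\widehat{P}$ and $\widetilde{P}$. Thus $\widetilde{X}$ has an absorbing state $\widetilde{\partial}=N$, which is the first assertion of the corollary and the first hypothesis needed in Proposition \ref{propx6}.

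Second, I would note that the witness condition (\ref{eqxy28}), namely $\Lambda\mathbf{e}_d=\pi(d)\mathbf{e}_{\widetilde{\partial}}$, is precisely relation (\ref{eqxy7}) of Corollary \ref{coro1}$(iv)$ when $d=\widetilde{\partial}=N$. Hence $d=N$ is a witness state for the monotone $P$. Applying Proposition \ref{propx6} with this witness state then yields in one stroke that $N$ is separable and that $\widetilde{X}$ is a sharp dual of $\overleftarrow{X}$.

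Finally, for the linkage of the initial conditions $\overleftarrow{X}_0\sim\delta_1$ and $\widetilde{X}_0\sim\delta_1$, I would compute $\mathbf{e}_1'\Lambda$ directly from the explicit form (\ref{eqxy5}) of $\Lambda$: for $y\in I_N$,
\[
\Lambda(1,y)=\mathbf{1}(1\ge y)\,\frac{\pi(y)}{\pi^{c}(1)}=\mathbf{1}(y=1)\,\frac{\pi(1)}{\pi(1)}=\mathbf{1}(y=1),
\]
so $\mathbf{e}_1'\Lambda=\mathbf{e}_1'$, which is precisely the linkage relation (\ref{eqxy27}) when both initial distributions are $\delta_1$. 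There is no real obstacle here; the only thing to be careful about is to check that the witness identification is with $d=\widetilde{\partial}=N$ (not with two distinct states), which is immediate because $N$ is the absorbing state singled out by Corollary \ref{coro1}$(iii)$ and also appears on both sides of (\ref{eqxy7}).
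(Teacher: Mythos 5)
Your proof is correct and follows essentially the same route as the paper, which treats this corollary as an assembly of Corollary \ref{coro1}$(iii)$--$(iv)$ (giving $\widetilde{\partial}=N$ as absorbing state and the witness relation (\ref{eqxy7})), Proposition \ref{propx6} (witness $\Rightarrow$ separable and sharp), and the observation that $\mathbf{e}_1'=\mathbf{e}_1'\Lambda$ for the Siegmund link (\ref{eqxy5}), which gives the linkage of the $\delta_1$ initial conditions. No gaps.
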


\section{The Diaconis-Fill Coupling}
\label{SectionDFC}

Let $\widetilde{P}$ be a $\Lambda-$intertwining of $P$. Consider the
following stochastic kernel $\underline{P}$ defined on $I\times \widetilde{I}
$, which was introduced in \cite{DF}, 
\begin{equation}  
\label{eqxy29}
\underline{P}\left((x,\widetilde{x}),(y,\widetilde{y})\right)= 
\frac{P(x,y)\,\widetilde{P}(\widetilde{x},\widetilde{y})\, 
\Lambda (\widetilde{y},y)} {(\Lambda P)(\widetilde{x},y)} 
\mathbf{1} \left( (\Lambda P)(\widetilde{x},y)>0\right).
\end{equation}

Let $\underline{X}=(\underline{X}_{n}:n\ge 0)$ be the chain taking values in 
$I\times \widetilde{I}$, evolving with the kernel $\underline{P}$ and with
initial distribution 
$$
{\mathbb P}\left(X_0=x_0, \widetilde{X}_0=\widetilde{x}_0\right)= 
{\underline{\pi}}_0(x_0,{\widetilde x}_0)=\pi^{\prime}_0({\widetilde x}_0)
\Lambda(\widetilde{x_0},x_0),\; x_0\in I,{\widetilde x}_0\in \widetilde I, 
$$
where ${\widetilde{\pi}}^{\prime}_0$ is an initial distribution of 
$\widetilde{X}$. In \cite{DF} it was proven that $\underline{X}$ starting
from ${\underline{\pi}}^{\prime}_0$ is a coupling of the chains $X$ and 
$\widetilde{X}$ starting from 
$\pi^{\prime}_0={\widetilde{\pi}}^{\prime}_0 \Lambda$ and 
${\widetilde{\pi}}^{\prime}_0$, respectively. Since 
$X$ and $\widetilde{X}$ are the components of $\underline{X}$ 
one puts $\underline{X}_{n}=(X_{n},\widetilde{X}_{n})$.

Let ${\mathbb P}$ be the probability measure on 
$(I\times {\widetilde I})^\NN $ induced by the coupling transition kernel 
$\underline{P}$ and assume 
$\underline{X}$ starts from linked initial conditions. In \cite{DF} (also 
\cite{cpy}) it was shown, 
\begin{equation}  
\label{eqxy30}
\forall \, n\ge 0\,:\;\;\; \Lambda (\widetilde{x}_{n},x_{n})= 
{\mathbb P}\left(X_{n}=x_{n}\mid \widetilde{X}_{n}=
\widetilde{x}_{n}\right) ={\mathbb P}
\left(X_{n}=x_{n}\mid \widetilde{X}_{0}= \widetilde{x}_{0} \cdots 
\widetilde{X}_{n}= \widetilde{x}_{n}\right)\,.
\end{equation}
Hence, the equality $\pi^{\prime}=\mathbf{e}^{\prime}_{\widetilde{\partial}}
\Lambda$ in (\ref{eqxy2}) together with relation (\ref{eqxy30}) give 
$$
\pi(x)=\Lambda (\widetilde{\partial },x)= {\mathbb P}(X_{n}=
x\mid \widetilde{X}_{0}= \widetilde{x}_{0}\cdots \widetilde{X}_{n}= 
\widetilde{\partial }). 
$$
Then, the following result shown in \cite{DF} holds.

\begin{theorem}
\label{theo2} 
Let $X$ be an irreducible positive recurrent Markov chain with
stationary distribution $\pi ^{\prime }$ and let $\widetilde{X}$ be a 
$\Lambda -$intertwining of $X$. Assume the initial conditions are linked,
meaning $\pi _{0}^{\prime }={\widetilde{\pi }}_{0}^{\prime }\Lambda $. 
Then, 
${\widetilde{X}}$ is called a strong stationary dual of $X$, which means
that the following equality is satisfied, 
$$
\pi(x)={\mathbb P}\left( X_{n}=x\mid \widetilde{X}_{0}=
\widetilde{x}_{0}\cdots \widetilde{X}_{n-1}=\widetilde{x}_{n-1},
\widetilde{X}_{n}=\widetilde{\partial }\right) ,\,\forall x\in I,\,n\ge 0,
$$
where $\widetilde{x}_{0}\cdots \widetilde{x}_{n-1}\in \widetilde{I}$ satisfy 
${\mathbb P}\left( \widetilde{X}_{0}=\widetilde{x}_{0}\cdots 
\widetilde{X}_{n-1}=\widetilde{x}_{n-1},\widetilde{X}_{n}=
\widetilde{\partial }\right) >0$. $\Box $
\end{theorem}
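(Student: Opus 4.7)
The plan is to assemble the theorem from the two identities already set up immediately above its statement in the excerpt: the conditional characterization (\ref{eqxy30}) and the absorbing-state identity (\ref{eqxy2}). Under the linked initial conditions $\pi_0' = \widetilde{\pi}_0' \Lambda$, the intertwining propagates through time, so $\pi_n' = \widetilde{\pi}_n' \Lambda$ holds for every $n \ge 0$, and this is what guarantees that the conditional law of $X_n$ given the entire $\widetilde{X}$-history depends only on $\widetilde{X}_n$ and is given by the row of $\Lambda$ at that state; that is precisely (\ref{eqxy30}).

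Granting (\ref{eqxy30}), the theorem is then a one-line specialization. Setting $\widetilde{x}_n = \widetilde{\partial}$ in (\ref{eqxy30}) yields
\[
\PP\bigl(X_n = x \mid \widetilde{X}_0 = \widetilde{x}_0,\,\ldots,\,\widetilde{X}_{n-1} = \widetilde{x}_{n-1},\,\widetilde{X}_n = \widetilde{\partial}\bigr) = \Lambda(\widetilde{\partial}, x),
\]
valid on any history having positive probability. Since $\widetilde{\partial}$ is absorbing for $\widetilde{P}$, Remark \ref{remk1} and equation (\ref{eqxy2}) give $\pi' = \mathbf{e}_{\widetilde{\partial}}' \Lambda$, i.e.\ $\Lambda(\widetilde{\partial}, x) = \pi(x)$. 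Substituting closes the argument.

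The real work, if (\ref{eqxy30}) were not cited, would be to establish it by induction on $n$ from the coupling kernel (\ref{eqxy29}). The base case $n=0$ is immediate from $\underline{\pi}_0(x_0,\widetilde{x}_0) = \widetilde{\pi}_0(\widetilde{x}_0)\Lambda(\widetilde{x}_0, x_0)$. For the step, I would write
\[
\PP(X_{n+1}{=}y,\widetilde{X}_{n+1}{=}\widetilde{y}\mid \widetilde{X}_0{=}\widetilde{x}_0,\ldots,\widetilde{X}_n{=}\widetilde{x}_n) = \sum_x \Lambda(\widetilde{x}_n,x)\,\underline{P}\bigl((x,\widetilde{x}_n),(y,\widetilde{y})\bigr),
\]
plug in (\ref{eqxy29}), and use the intertwining identity $\sum_x \Lambda(\widetilde{x}_n,x)P(x,y) = (\widetilde{P}\Lambda)(\widetilde{x}_n,y)$ to cancel the normalizer $(\Lambda P)(\widetilde{x}_n,y)$. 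After summing out $y$ to extract the conditional marginal for $\widetilde{X}_{n+1}$, the factor $\widetilde{P}(\widetilde{x}_n,\widetilde{y})$ cancels and only $\Lambda(\widetilde{y},y)$ survives, giving (\ref{eqxy30}) at time $n+1$.

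The main obstacle is really only bookkeeping: one must check that every conditioning event used has positive probability, and that in invoking (\ref{eqxy30}) at the last step we are allowed to restrict to $\widetilde{x}_n = \widetilde{\partial}$, which is guaranteed by the hypothesis $\PP(\widetilde{X}_0 = \widetilde{x}_0,\ldots,\widetilde{X}_{n-1} = \widetilde{x}_{n-1},\widetilde{X}_n = \widetilde{\partial}) > 0$ stated in the theorem. No further structural ingredient is needed; the content of the statement lies entirely in (\ref{eqxy30}) together with the characterization of $\pi$ as the $\widetilde{\partial}$-row of $\Lambda$.
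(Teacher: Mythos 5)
Your proposal is correct and follows the paper's own route exactly: the paper likewise obtains the theorem by specializing the conditional identity (\ref{eqxy30}) to $\widetilde{x}_n=\widetilde{\partial}$ and substituting $\Lambda(\widetilde{\partial},x)=\pi(x)$ from (\ref{eqxy2}), citing \cite{DF} for (\ref{eqxy30}) itself. Your supplementary induction sketch for (\ref{eqxy30}) via the coupling kernel (\ref{eqxy29}) is also sound and matches the standard Diaconis--Fill argument, though the paper does not reproduce it.
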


\subsection{Quasi-stationarity and coupling}

Below we state a property on quasi-stationarity of the coupling. Let us
recall some elements on quasi-stationarity (see for instance \cite{cmsm}).
Let $Y=(Y_{n}:n\ge 0)$ be a Markov chain with values on a countable set $J$
and transition stochastic kernel $Q$. Let $K$ be a nonempty strictly subset
of $J$ and let $\tau_{K}$ be the hitting time of $K$. A probability measure 
$\mu $ on $J\setminus K$ is a quasi-stationary distribution (q.s.d.) for $Y$
and the forbidden set $K$ if 
\begin{equation}
\label{eqxy31}
{\mathbb P}_{\mu }(Y_{n}=j\,|\,\tau _{K}>n)=\mu (j),\;j\in J\setminus K.
\end{equation}
Note that the q.s.d. does not depend on the behavior of the chain $Y$ on 
$K$, so we assume $Y$ is absorbed at $K$. In order that $\mu $ is a q.s.d. 
it suffices to satisfy (\ref{eqxy31}) for $n=1$, which is equivalent for 
$\mu$ being a left eigenvector of $Q$, so 
$$
\mu ^{\prime }Q=\gamma \mu ^{\prime }.
$$
In the finite case, $\mu $ is the normalized left Perron-Frobenius
eigenvector with Perron-Frobenius eigenvalue $\gamma $. It is easily checked
that 
$$
\gamma ={\mathbb P}_{\mu }(\tau _{K}>1)=\sum_{j\in J\setminus K}
\mu_{j}\sum_{j^{\prime }\in J\setminus K}P(j,j^{\prime }). 
$$
The hitting time 
$\tau _{K}$ starting from $\mu $ is geometrically distributed: 
${\mathbb P}_{\mu }(\tau _{K}>n)=\gamma ^{n}$, this is why $\gamma$ 
is called the survival decay rate. If the 
chain is irreducible in $J\setminus K$, then
every q.s.d. is strictly positive and for all $j\in J\setminus K$ one has 
${\mathbb P}_{j}(\tau _{K}>n)\le C_{j}\gamma ^{n}$ 
with $C_{j}=\mu _{j}^{-1}$.

In the next result we put in relation the q.s.d. of the process 
${\widetilde{X}}$ with forbidden state $\widetilde{\partial }$ and 
the q.s.d. of the
process $(X,{\widetilde{X}})$ with forbidden set 
${\underline{\partial }}=I\times \{\widetilde{\partial }\}$. 
Since ${\widetilde{X}}=\widetilde{\partial }$ 
is equivalent to $(X,{\widetilde{X}})
\in {\underline{\partial }}$, then it is straightforward 
that the survival decay rates for ${\widetilde{X}}$ 
and $(X,{\widetilde{X}})$ with respect to $\widetilde{\partial }$ and 
${\underline{\partial }}$ respectively, are the same (that is the
Perron-Frobenius eigenvalue is common for both processes).

\begin{proposition}
\label{propx7} 
Assume ${\widetilde{\mu }}^{\prime }$ is a q.s.d. for the
process ${\widetilde{X}}$ with the forbidden state $\widetilde{\partial}$.
Then, the probability measure 
$$
{\underline{\mu }}(x_{0},{\widetilde{x}}_{0})={\widetilde{\mu }}
({\widetilde{x}}_{0})\Lambda 
({\widetilde{x}}_{0},x_{0}),\;(x_{0},{\widetilde{x}}_{0})\in
I\times (\widetilde{I}\setminus \{{\widetilde{\partial }}\}),
$$
is a q.s.d. for $(X,{\widetilde{X}})$ with forbidden set 
${\underline{\partial}}=I\times \{\widetilde{\partial }\}$.
\end{proposition}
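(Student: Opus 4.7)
The plan is to verify directly that $\underline{\mu}$ is a left eigenvector of the coupling kernel $\underline{P}$ restricted to $I \times (\widetilde{I} \setminus \{\widetilde{\partial}\})$, with eigenvalue equal to the common survival decay rate $\gamma$ of $\widetilde{X}$ and $(X, \widetilde{X})$. By the characterization of q.s.d.\ recalled before the statement (equation (\ref{eqxy31}) being automatic once the left-eigenvector relation at $n=1$ is verified), this is exactly what needs to be shown.

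First I would fix $(y,\widetilde{y}) \in I \times (\widetilde{I} \setminus \{\widetilde{\partial}\})$ and plug the definitions into
\[
\sum_{(x_0,\widetilde{x}_0)} \underline{\mu}(x_0,\widetilde{x}_0)\, \underline{P}\bigl((x_0,\widetilde{x}_0),(y,\widetilde{y})\bigr),
\]
where the sum runs over $I \times (\widetilde{I}\setminus\{\widetilde{\partial}\})$. Using the definitions of $\underline{\mu}$ and of $\underline{P}$ from (\ref{eqxy29}), and pulling $\Lambda(\widetilde{y},y)$ out of the sum since it does not depend on $(x_0,\widetilde{x}_0)$, this becomes
\[
\Lambda(\widetilde{y},y)\sum_{\widetilde{x}_0 \neq \widetilde{\partial}} \widetilde{\mu}(\widetilde{x}_0)\,\widetilde{P}(\widetilde{x}_0,\widetilde{y}) \cdot \frac{\sum_{x_0} \Lambda(\widetilde{x}_0,x_0)\,P(x_0,y)}{(\Lambda P)(\widetilde{x}_0,y)}.
\]

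The key simplification is that the inner numerator is, by definition, exactly $(\Lambda P)(\widetilde{x}_0,y)$, so the ratio equals $1$ whenever $(\Lambda P)(\widetilde{x}_0,y) > 0$; when it vanishes the indicator in (\ref{eqxy29}) kills the corresponding term, so the expression collapses to
\[
\Lambda(\widetilde{y},y) \sum_{\widetilde{x}_0 \neq \widetilde{\partial}} \widetilde{\mu}(\widetilde{x}_0)\,\widetilde{P}(\widetilde{x}_0,\widetilde{y}).
\]
Invoking the hypothesis that $\widetilde{\mu}$ is a q.s.d.\ for $\widetilde{X}$ with forbidden state $\widetilde{\partial}$, the remaining sum equals $\gamma\, \widetilde{\mu}(\widetilde{y})$, and therefore the whole expression equals $\gamma\, \Lambda(\widetilde{y},y)\widetilde{\mu}(\widetilde{y}) = \gamma\, \underline{\mu}(y,\widetilde{y})$, as desired. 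Finally, $\underline{\mu}$ is a probability measure on $I \times (\widetilde{I}\setminus\{\widetilde{\partial}\})$ because $\Lambda$ is stochastic (so $\sum_{x_0}\Lambda(\widetilde{x}_0,x_0)=1$) and $\widetilde{\mu}$ is a probability measure on $\widetilde{I}\setminus\{\widetilde{\partial}\}$.

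There is no real obstacle here; the computation is essentially bookkeeping. The only subtlety worth verifying carefully is the handling of the indicator $\mathbf{1}((\Lambda P)(\widetilde{x}_0,y)>0)$: on the support where this indicator is $0$ the contribution is zero anyway, so one may formally cancel the denominator $(\Lambda P)(\widetilde{x}_0,y)$ against the sum $\sum_{x_0}\Lambda(\widetilde{x}_0,x_0)P(x_0,y)$ without worrying about division by zero. Once that is handled, the argument is essentially the same algebraic identity that underlies the intertwining relation itself.
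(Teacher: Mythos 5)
Your overall strategy is identical to the paper's: verify the left-eigenvector identity $\underline{\mu}^{\prime}\,\underline{P}=\gamma\,\underline{\mu}^{\prime}$ on $I\times(\widetilde{I}\setminus\{\widetilde{\partial}\})$ by summing over $x_{0}$ first, cancelling $\sum_{x_{0}}\Lambda(\widetilde{x}_{0},x_{0})P(x_{0},y)$ against the denominator $(\Lambda P)(\widetilde{x}_{0},y)$, and then applying the q.s.d.\ hypothesis on $\widetilde{\mu}$. There is, however, one step where your justification does not work as written. After the cancellation the expression is
\begin{equation*}
\Lambda(\widetilde{y},y)\sum_{\widetilde{x}_{0}\neq\widetilde{\partial}}\widetilde{\mu}(\widetilde{x}_{0})\,\widetilde{P}(\widetilde{x}_{0},\widetilde{y})\,\mathbf{1}\bigl((\Lambda P)(\widetilde{x}_{0},y)>0\bigr),
\end{equation*}
that is, the indicator survives the cancellation. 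You assert that ``when it vanishes the indicator kills the corresponding term, so the expression collapses to'' the unrestricted sum $\Lambda(\widetilde{y},y)\sum_{\widetilde{x}_{0}}\widetilde{\mu}(\widetilde{x}_{0})\widetilde{P}(\widetilde{x}_{0},\widetilde{y})$. That is a non sequitur: discarding terms can only decrease a sum of nonnegative quantities, so to identify the restricted sum with the unrestricted one you must show that every discarded term would have been zero anyway, i.e.\ that $(\Lambda P)(\widetilde{x}_{0},y)=0$ forces $\widetilde{P}(\widetilde{x}_{0},\widetilde{y})\Lambda(\widetilde{y},y)=0$. Your closing remark only addresses the (genuinely harmless) division-by-zero direction, not this one.

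The missing ingredient is precisely the intertwining relation: $(\Lambda P)(\widetilde{x}_{0},y)=(\widetilde{P}\Lambda)(\widetilde{x}_{0},y)\ge\widetilde{P}(\widetilde{x}_{0},\widetilde{y})\Lambda(\widetilde{y},y)$, so whenever the term $\widetilde{\mu}(\widetilde{x}_{0})\widetilde{P}(\widetilde{x}_{0},\widetilde{y})\Lambda(\widetilde{y},y)$ is positive the indicator equals $1$, and the two sums agree (when $\Lambda(\widetilde{y},y)=0$ both sides vanish trivially). This is exactly the step the paper makes explicit (``if $\widetilde{P}(\widetilde{x},\widetilde{y})>0$ and $\Lambda(\widetilde{y},y)>0$, then $(\Lambda P)(\widetilde{x},y)=(\widetilde{P}\Lambda)(\widetilde{x},y)>0$''). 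You gesture at it in your final sentence but never deploy it where it is needed. With that one line added, your argument coincides with the paper's proof.
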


\begin{proof}
From the hypothesis we have
$$
\sum_{\wx\in \wI, \wx\neq \wpartial} \wmu(\wx) \wP(\wx,\wy)
=\gamma \wmu(\wy),\; \wy\in  \wI\setminus \{\wpartial\},
\hbox{ with }
\gamma=1-\sum_{\wx\in \wI, \wx\neq \wpartial} 
\mu(\wx)\wP(\wx,\wpartial).
$$
Now
\begin{eqnarray*}
&{}& 
\sum_{x\in I}\sum_{\wx\in \wI\setminus \{\wpartial\}}\wmu(\wx)
\Lambda(\wx,x) \uP((x,\wx),(y,\wy))\\
&{}&=\sum_{x\in I} \sum_{\wx\in  \wI\setminus \{\wpartial\}}
\wmu(\wx)\Lambda(\wx,x)P(x,y)\wP(\wx,\wy)\Lambda(\wy,y)
{\bf 1}((\Lambda P)(\wx,y)>0)\Lambda(\wx,y)^{-1}\\
&{}& =
\sum_{\wx\in  \wI\setminus \{\wpartial\}}\wmu(\wx)\wP(\wx,\wy)
\Lambda(\wy,y)
{\bf 1}((\Lambda P)(\wx,y)>0)(\Lambda P)(\wx,y)^{-1}
(\sum_{x\in I} \Lambda(\wx,x) P(x,y))\\
&{}& =\sum_{\wx\in  \wI\setminus \{\wpartial\}}
\wmu(\wx)\wP(\wx,\wy) \Lambda(\wy,y) {\bf 1}((\Lambda P)(\wx,y)>0).
\end{eqnarray*}
Now, if $P(\wx,\wy)>0$ and $\Lambda(\wy,y)>0$, then we have
$(\Lambda P)(\wx,y)=(\wP \Lambda)(\wx,y)>0$ and so we get
\begin{eqnarray*}
\sum_{x\in I} \sum_{\wx\in  \wI\setminus \{\wpartial\}} 
\wmu'(\wx)\underline{P}((x,\wx),(y,\wy))&=&
\sum_{\wx\in  \wI\setminus \{\wpartial\}}
\wmu(\wx)\wP(\wx,\wy)\Lambda(\wy,y)\\
&{}&=\gamma \wmu(\wy)\Lambda(\wy,y)=\gamma \umu(y,\wy). 
\end{eqnarray*} 
Hence, $\umu$ is a q.s.d. for $\uX$ with forbidden set 
$\upartial$. 
\end{proof}

\begin{remark}
\label{remk6} 
Based upon basic relations on quasi-stationarity (see Theorem $2.6$ 
in \cite{cmsm}), it can be shown that starting from ${\underline{\mu }}$
the random variables 
${\underline{X}}_{\tau_{{\underline{\partial }}}}
=(X_{\tau _{{\underline{\partial }}}},{\underline{\partial }})$ and 
$\tau_{{\underline{\partial }}}$ are independent, 
so $X_{\tau _{{\underline{\partial }}}}$ 
and $\tau_{{\underline{\partial }}}$ are independent. But in
the setting of the Diaconis-Fill coupling this property is contained in
Theorem \ref{theo2}. In fact, the latter result ensures a much stronger
result which is that starting from any linked initial condition 
${\underline{\pi}}_{0}$ one has that 
$X_{\tau _{{\underline{\partial }}}}$ and 
$\tau_{{\underline{\partial}}}$ are independent and 
$X_{\tau _{{\underline{\partial}}}}\sim \pi ^{\prime }$. $\,\Box $
\end{remark}

Let $P$ be monotone on $I_N$. From Corollary \ref{coro1} $(i)$, 
$\Lambda({\widetilde x},y)=\mathbf{1}({\widetilde x}\ge y) 
\pi(y)/\pi^{c}({\widetilde x})$ and 
$\widetilde{P}({\widetilde x},{\widetilde y})=\widehat{P} 
({\widetilde x},{\widetilde y})\pi^{c}({\widetilde y})/
\pi^{c} ({\widetilde x})$. 
The coupling (\ref{eqxy29}) for $\overleftarrow{P}$ satisfies, 
$$
\Lambda \overleftarrow{P}({\widetilde x},y)=\sum_{z\le {\widetilde x}} 
\frac{\pi(z)}{\pi^{c}({\widetilde x})} P(y,z)
\frac{\pi(y)}{\pi(z)}= \frac{\pi(y)}{\pi^{c}({\widetilde x})}
(\sum_{z\le {\widetilde x}} P(y,z)). 
$$
Then, $\Lambda \overleftarrow{P}({\widetilde x},y)>0$ is equivalent to 
$\sum_{z\le {\widetilde x}} P(y,z)>0$, for ${\widetilde x}, y\in I$, so 
\begin{eqnarray*}
&{}& \underline{\overleftarrow{P}} ((x,\widetilde{x}),(y,\widetilde{y})) 
=\frac{\overleftarrow{P}(x,y)\, \widetilde{P}(\widetilde{x},\widetilde{y})\,
\Lambda(\widetilde{y},y)} {(\Lambda \overleftarrow{P})(\widetilde{x},y)} 
\mathbf{1} \left( (\Lambda P)(\widetilde{x},y)>0\right) \\
&{}& =\frac{\pi(y)}{\pi(x)} \mathbf{1}({\widetilde y} \ge y) P(y,x) 
\widehat{P}({\widetilde x},{\widetilde y}) 
\mathbf{1}(\sum_{z\le {\widetilde x}}P(y,z)>0) 
(\sum_{z\le {\widetilde x}} P(y,z))^{-1}.
\end{eqnarray*}

Now, in this coupling we can set the truncations kernel $P_N$ of the mean
expected value, whose reversed time kernel satisfies 
$\overleftarrow{P}_N(x,y)=\pi_N(x)^{-1}P_N(y,x)\pi_N(y)$. 
From Proposition \ref{propx5} one
gets that the time for $\widetilde P_N$ to attain the absorbing state $N$ is
stochastically smaller that the time for $\widetilde P_{N+1}$ to attain 
$N+1$. Then, the time for $\overleftarrow{P}_N$ to attain the stationary
distribution $\pi_N$ is stochastically smaller than the time for 
$\overleftarrow{P}_ {N+1}$ to attain $\pi_{N+1}$.

\section*{Acknowledgements}

The authors acknowledge the partial support given by the CONICYT BASAL-CMM
project AFB170001. S. Mart\'{i}nez thanks the hospitality of {Laboratoire de Physique
Th\'{e}orique et Mod\'{e}lisation at the Universit\'{e} de Cergy-Pontoise. }


\begin{thebibliography}{99}
\bibitem{AD}  D. Aldous, P. Diaconis. Strong uniform times and finite random
walks. Adv. in Appl. Math. 8, no. 1, 69--97, (1987).

\bibitem{mb}  M. Brown. Consequences of Monotonicity for Markov Transition
Functions. Technical rept. DTIC (1990).
https://apps.dtic.mil/dtic/tr/fulltext/u2/a222852.pdf

\bibitem{cmsm}  P. Collet, S. Mart\'{i}nez, J. San Mart\'{i}n.
\emph{Quasi-stationary distributions}. Probability and
Its Applications, Springer,
Ney York, (2013).

\bibitem{cpy}  P. Carmona, F. Petit, M. Yor. Beta-gamma random variables and
interwining relations between certain Markov processes. Rev. Matem\'{a}tica
Iberoamericana 14, 311--367, (1998).

\bibitem{DF}  P. Diaconis, J. A. Fill. Strong stationary times via a new
form of duality. Ann. Probab. 18, no. 4, 1483--1522, (1990).

\bibitem{DF1}  P. Diaconis, J. A. Fill. Examples for the theory of strong 
stationary duality with countable state spaces. Pprobab. in the Engineering
and Inf. Sci. 4, 157--180, (1990).

\bibitem{F}  J. A. Fill. The passage time distribution for a birth-and-death
chain: Strong stationary duality gives a first stochastic proof. Journal of 
Theoretical Probability, Volume 22, Number 3, 543-557 (2009).

\bibitem{F2}  J. A. Fill. On hitting times and fastest strong stationary  
times for skip-free and more general chains. Journal of Theoretical
Probability, 22 no. 3, 587-600 (2009).

\bibitem{HJ}  R. A. Horn, C. R. Johnson. \emph{Matrix Analysis.} Cambridge
University Press (1985).

\bibitem{HM1}  T. Huillet, S. Mart\'{i}nez. Duality and intertwining for
discrete Markov kernels: relations and examples. Adv. in Appl. Prob. 43, No.
2, 437-460 (2011).

\bibitem{HM}  T. Huillet, M. M\"{o}hle. Duality and asymptotics for a class
of nonneutral discrete Moran models. J. Appl. Prob. 46, No. 3, 866-893
(2009).

\bibitem{K}  J. Keilson. \emph{Markov chain models - rarity and
exponentiality.} Applied Mathematical Sciences, 28. Springer-Verlag, New  
York-Berlin (1979).

\bibitem{Lig}  T. M. Liggett. \emph{Interacting particle systems}.
Grundlehren der Mathematischen Wissenschaften [Fundamental Principles of
Mathematical Sciences], 276. Springer-Verlag, New York (1985).

\bibitem{MM}  M. M\"{o}hle. The concept of duality and applications to
Markov processes arising in neutral population genetics models. Bernoulli 5,
761--777 (1999).

\bibitem{PS}  M. Pollak, D. Siegmund. Convergence of quasistationary to
stationary distributions for stochastically monotone Markov processes. J.
Appl. Probab. 23, No. 1, 215\^{a}??220 (1986).

\bibitem{Sieg}  D. Siegmund. The equivalence of absorbing and reflecting
barrier problems for stochastically monotone Markov processes. Ann.
Probability 4, No. 6, 914--924 (1976).
\end{thebibliography}
\end{document}